\theoremstyle{definition}
\newtheorem{theorem}{Theorem}[subsection]
\newtheorem{proposition}[theorem]{Proposition}
\newtheorem{corollary}[theorem]{Corollary}
\def\calC{\mathcal{C}}
\def\calG{\mathcal{G}}
\def\calM{\mathcal{M}}
\def\calO{\mathcal{O}}
\def\frg{\mathfrak{g}}
\def\frm{\mathfrak{m}}
\def\frn{\mathfrak{n}}
\def\frp{\mathfrak{p}}
\def\ffrb{\mathfrak{B}}
\def\ffrm{\mathfrak{M}}
\def\ffrx{\mathfrak{X}}
\def\bbA{\mathbb{A}}
\def\bbF{\mathbb{F}}
\def\bbG{\mathbb{G}}
\def\bbP{\mathbb{P}}
\def\bbQ{\mathbb{Q}}
\def\bbZ{\mathbb{Z}}
\DeclareFontFamily{U}{wncy}{}
\DeclareFontShape{U}{wncy}{m}{n}{<->wncyr10}{}
\DeclareSymbolFont{mcy}{U}{wncy}{m}{n}
\DeclareMathSymbol{\Sha}{\mathord}{mcy}{"58} 
\def\bs{\backslash}
\DeclareMathOperator{\ox}{\otimes}
\def\comp{\circ}
\def\inj{\hookrightarrow}
\def\isom{\cong}
\def\aisom{\xrightarrow{\sim}}
\def\ot{\leftarrow}
\newcommand{\dlim}{\varinjlim}
\newcommand{\ilim}{\varprojlim}
\DeclareMathOperator{\Tot}{Tot}
\DeclareMathOperator{\GL}{GL}
\DeclareMathOperator{\Lie}{Lie}
\DeclareMathOperator{\gl}{\mathfrak{gl}}
\def\bb1{\mathbbm{1}}
\DeclareMathOperator{\DR}{DR}
\DeclareMathOperator{\HT}{HT}
\DeclareMathOperator{\dR}{dR}
\DeclareMathOperator{\Spf}{Spf}
\def\LT{\text{LT}}
\def\lan{\text{la}}
\def\sm{\text{sm}}
\def\hat{\widehat}
\def\bar{\overline}
\def\tilde{\widetilde}
\def\-{\text{-}}
\def\-{\text{-}}
\def\fl{\mathscr{F}\ell}
\def\GM{\text{GM}}
\def\Nilp{\text{Nilp}}
\def\Dr{\text{Dr}}
\def\ov{\stackrel}
\begin{document}
\title{de Rham cohomology of Lubin-Tate spaces and Drinfeld spaces}
\author{Benchao Su}
\date{}
\maketitle
\tableofcontents

\begin{abstract}
Let $d\ge 1$ be an integer. We use the methods \cite{pan_locally_2022}, \cite{pan_locally_2022-1} introduced by Lue Pan to prove that the compactly supported cohomology of Lubin-Tate towers and Drinfeld towers are isomorphic, as $\GL_{d+1}(L)\times D_{L,\frac{1}{d+1}}^\times$-modules.
\end{abstract}

\subsection{Introduction}
Let $L$ be a finite extension of $\bbQ_p$, and let $d\ge 1$ be an integer. Let $G=\GL_{d+1}(L)$, and let $\check{G} = D_{L, \frac{1}{d+1}}^\times$ be the group of invertible elements in a division algebra over $L$ with invariant $\frac{1}{d+1}$. Let $C$ be the $p$-adic completion of an algebraic closure of $L$. The foundational works \cite{MR238854,drinfeld_coverings_1976,rapoport_period_1996} introduced two important towers of rigid analytic spaces $\{\calM_{\LT,n}\}_n$, $\{\calM_{\Dr,n}\}_n$ over $C$, namely the Lubin-Tate towers and the Drinfeld towers. They carry natural $G\times\check{G}$-actions and the action of the Weil group of $L$. The non-abelian Lubin-Tate theory \cite{MR1044827,harris_geometry_2001,MR2308851} shows that the $\ell$-adic cohomology of these towers encodes the local Langlands correspondence and the local Jacquet-Langlands correspondence. In the $p$-adic case, several recent works \cite{CDN20,junger2022cohomologierhamdurevetementLT,junger2022cohomologierhamdurevetementDr} have shown that in certain cases the compactly supported de Rham cohomology of these towers also encodes the local Langlands correspondence and the local Jacquet-Langlands correspondence. The main theorem of this note is as follows:
\begin{theorem}\label{thm:main}
For any $i\ge 0$, there exists a $C$-linear $G\times \check G$-equivariant isomorphism of the compactly supported de Rham cohomology groups
\begin{align*}
    H^i_{\dR,c}(\calM_{\LT,\infty})\isom H^i_{\dR,c}(\calM_{\Dr,\infty}).
\end{align*}
\end{theorem}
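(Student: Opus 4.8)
The plan is to reduce the statement to an isomorphism of cohomologies at infinite level on the Fargues–Fontaine curve side, exploiting the fact that both towers are uniformized by the same local Shimura datum and that their infinite-level perfectoid spaces are identified by the Fargues–Scholze / Scholze–Weinstein isomorphism $\calM_{\LT,\infty}\isom\calM_{\Dr,\infty}$ as perfectoid spaces with $G\times\check G$-action (and compatibly with the Weil group action). The content of the theorem is therefore not a new geometric identification but the claim that the \emph{de Rham} cohomology, suitably defined at infinite level and with compact supports, is insensitive to which tower one uses to compute it — i.e. that $H^i_{\dR,c}$ is a genuine invariant of the common perfectoid space together with its descent data. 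Following Pan's method (\cite{pan_locally_2022,pan_locally_2022-1}), the idea is to realize de Rham cohomology via the pro-étale site and the sheaf $\mathcal{O}\bbB_{\dR}$ (or rather the relevant filtered pieces), where the infinite-level object makes sense directly, and then to compare the two finite-level descriptions through it.

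First I would set up the precise definition of $H^i_{\dR,c}(\calM_{\bullet,\infty})$ as a colimit (or appropriate derived limit–colimit) over the finite levels $n$ of the compactly supported de Rham cohomology $H^i_{\dR,c}(\calM_{\bullet,n})$, checking that the transition maps are $G_n\times\check G$-equivariant and that the colimit carries the correct smooth $G$-action. Second, I would pass to the pro-étale site of the infinite-level space and express de Rham cohomology in terms of a period-sheaf complex — concretely, using that $R\Gamma_{\dR,c}$ can be recovered from $R\Gamma_{\proet,c}$ with coefficients in $\mathcal{O}\bbB_{\dR}^+/t$-type sheaves, together with the Poincaré lemma identifying this with the constant-coefficient compactly supported cohomology twisted appropriately. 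Third, invoke the Scholze–Weinstein identification of the two infinite-level perfectoid spaces, which is equivariant for $G\times\check G$ and the Weil group; since the period sheaves and the compactly supported pro-étale cohomology are intrinsic to the perfectoid space with its group action, this produces the desired isomorphism $H^i_{\dR,c}(\calM_{\LT,\infty})\isom H^i_{\dR,c}(\calM_{\Dr,\infty})$ as $C$-linear $G\times\check G$-modules. Finally I would track the equivariance and $C$-linearity through each step, and verify that the "compact support" conditions on the two towers — which are phrased in terms of the two different finite-level rigid spaces (Lubin-Tate spaces versus Drinfeld's $p$-adic symmetric spaces and their covers) — correspond to each other under the identification, using that both cut out the same open/partially proper locus at infinite level.

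The main obstacle I expect is precisely this last point: de Rham cohomology with compact supports is \emph{not} obviously intrinsic to the infinite-level perfectoid space, because the rigid-analytic structure (hence the notion of "compactly supported differential forms," the Hodge filtration, and the integral period sheaves) lives at finite level, and the two towers present genuinely different finite-level geometries (e.g. $\calM_{\Dr,0}$ is Stein while $\calM_{\LT,0}$ is an open ball-like space of a different nature). Making the comparison rigorous requires showing that the relevant period-sheaf cohomology on the common perfectoid space, together with the datum of the two different pro-étale covers, computes the \emph{same} object — this is where Pan's techniques for handling $\mathcal{O}\bbB_{\dR}$-cohomology and the locally analytic / geometric Sen theory inputs become essential, and where the bulk of the technical work will lie. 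A secondary subtlety is the behaviour of the colimit defining infinite level: one must ensure the de Rham cohomology groups satisfy enough finiteness or exactness at each level (or after a suitable completion) for the colimit to commute with the period-sheaf computation, which may force restricting to the compactly supported (rather than ordinary) theory precisely because that is where the needed dualities and finiteness hold.
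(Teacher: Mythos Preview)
Your diagnosis of the central difficulty is exactly right: the de Rham complex is \emph{not} intrinsic to the perfectoid space $\calM_{\LT,\infty}\isom\calM_{\Dr,\infty}$, and the whole problem is to bridge the two finite-level de Rham theories across this isomorphism. However, your proposed resolution via the pro-\'etale site and $\calO\bbB_{\dR}$-type period sheaves remains a restatement of the obstacle rather than a mechanism for overcoming it. You would need a Poincar\'e lemma on the infinite-level space relating period-sheaf cohomology back to the \emph{smooth} de Rham complex coming from each tower separately, and it is not clear that the filtered pieces of $\calO\bbB_{\dR}$ see the two descent data symmetrically enough to produce the comparison; in particular you give no argument that the compactly-supported variant behaves well here. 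The phrase ``this is where the bulk of the technical work will lie'' is accurate, but the proposal does not indicate what that work would be.

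The paper takes a more direct and elementary route that you should compare against. Rather than pass through period sheaves, it constructs a single $G\times\check G$-equivariant sheaf $\calO_{\LT}:=\calO_{\calM_{\LT,\infty}}^{L\text{-}\lan,\frm^0=0}$ of locally $L$-analytic vectors annihilated by the horizontal Sen operator, proves by an explicit power-series expansion (Theorem~\ref{thm:structureofOm0}) that this sheaf is locally a completed tensor product $\calO_{\LT}^{\sm}\hat\otimes_C\pi^{-1}\calO_{\fl}$, and then shows by the same expansion on the Drinfeld side that $\calO_{\LT}\isom\calO_{\Dr}$ as subsheaves of the completed structure sheaf (Theorem~\ref{thm:isom}). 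On this intermediate sheaf one can define \emph{two} commuting differential operators $d$ and $\bar d$ --- one differentiating along the Lubin--Tate tower, the other along the flag variety --- and the key computation (Theorem~\ref{thm:ddbar}) identifies $d$ on the Lubin--Tate side with $\check{\bar d}$ on the Drinfeld side. This produces a concrete double complex whose rows resolve $\DR_{\Dr}$ and whose columns resolve $\DR_{\LT}$, giving a quasi-isomorphism $\DR_{\LT}\simeq\DR_{\Dr}$ of complexes of sheaves. Only at the very end does one invoke a general formalism (the six-functor machinery of Heyer--Mann on Berkovich spaces) to extract compactly supported cohomology from this sheaf-level quasi-isomorphism. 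The locally analytic Sen theory you mention is indeed the crucial input, but it is used to build the interpolating sheaf directly, not to control a period-sheaf spectral sequence.
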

In the case where the dimension $d=1$, this was proved in \cite[Théorème 4.1]{CDN20}. In general, this was proved in \cite[Theorem 5.2.2]{dospinescu2024jacquetlanglandsfunctorpadiclocally} and by Bosco-Dospinescu-Niziol (flip-flopping). We provide a relatively elementary and explicit approach to these results.

First, from \cite{scholze_moduli_2013} we know that by taking the limit over the levels of these two towers, we obtain two perfectoid spaces $\calM_{\LT,\infty}$, $\calM_{\Dr,\infty}$ over $C$, which are $G\times \check{G}$-equivariantly isomorphic:
\begin{align*}
    \calM_{\LT,\infty}\isom \calM_{\Dr,\infty}.
\end{align*}
Let $\calO_{\calM_{\LT,\infty}}$ be the completed structure sheaf on $\calM_{\LT,\infty}$, and let $\calO_{\calM_{\LT,\infty}}^{\sm}\subset \calO_{\calM_{\LT,\infty}}$ be the subsheaf consisting of sections which are fixed by a sufficiently small open compact subgroup of $G$. Then we can associate a de Rham complex 
\begin{align*}
    \DR_{\LT}:=[\calO_{\calM_{\LT,\infty}}^{\sm}\ov{d}\to \Omega_{\calM_{\LT,\infty}}^{1,\sm}\ov{d}\to \cdots\ov{d}\to \Omega_{\calM_{\LT,\infty}}^{d,\sm}]
\end{align*} 
given by taking differentials along the Lubin-Tate tower of finite levels, where $\Omega_{\calM_{\LT,\infty}}^{k,\sm}:=\dlim_n\pi_n^{-1}\Omega_{\calM_{\LT,n}}^k$ with $\pi_n:\calM_{\LT,\infty}\to \calM_{\LT,n}$ for $k=1,...,d$. The compactly supported de Rham cohomology groups $H^i_{\dR,c}(\calM_{\LT,\infty})$ are defined (in a suitable sense) as the compactly supported cohomology groups of the complex $\DR_{\LT}$. Similarly, on the Drinfeld side we can define the de Rham complex $\DR_{\Dr}$ and the compactly supported de Rham cohomology groups $H^i_{\dR,c}(\calM_{\Dr,\infty})$ for the Drinfeld tower. The isomorphism of Theorem \ref{thm:main} then can be given by the isomorphism of the de Rham complexes $\DR_{\LT}$ and $\DR_{\Dr}$.

However, it is difficult to directly deduce the isomorphism of de Rham cohomology groups from the isomorphism on the completed structure sheaves $\calO_{\calM_{\LT,\infty}}\isom \calO_{\calM_{\Dr,\infty}}$, since the naive definition of de Rham cohomology for perfectoid spaces is problematic. It turns out that on a smaller subsheaf of $\calO_{\calM_{\LT,\infty}}$ containing $\calO_{\calM_{\LT,\infty}}^{\sm}$ (which we will define later), we can lift the differential operators on $\calO_{\calM_{\LT,\infty}}^{\sm}$ to this slightly larger sheaf. This allows us to use the isomorphism $\calO_{\calM_{\LT,\infty}}\isom \calO_{\calM_{\Dr,\infty}}$ and directly compare these two de Rham complexes.

Motivated by the works \cite{pan_locally_2022}, \cite{pan_locally_2022-1}, \cite{camargo_geometric_2022}, \cite{camargo2024locallyanalyticcompletedcohomology}, we are led to study the subsheaf consisting of locally $L$-analytic sections (for the $G$-action) $\calO_{\calM_{\LT,\infty}}^{L\-\lan}\subset \calO_{\calM_{\LT,\infty}}$. By geometric Sen theory \cite{camargo_geometric_2022}, there exists an action of the Lie algebroid $\pi^{-1}\frm^0$ on $\calO_{\calM_{\LT,\infty}}^{L\-\lan}$, where $\pi:\calM_{\LT,\infty}\to \fl$ is the Hodge-Tate period map \cite{scholze_moduli_2013}, and $\frm^0\isom \frp^0/\frn^0$ with $\frp^0$ being the universal parabolic subalgebra on $\fl$ and $\frn^0$ being the universal nilpotent algebra on $\fl$. Let $\calO_{\calM_{\LT,\infty}}^{L\-\lan,\frm^0=0}\subset \calO_{\calM_{\LT,\infty}}^{L\-\lan}$ be the subsheaf consisting of sections killed by the $\pi^{-1}\frm^0$-action. Then we study the local structure of $\calO_{\calM_{\LT,\infty}}^{L\-\lan,\frm^0=0}$ in detail. Moreover, although the sheaf we consider on the Drinfeld side $\calO_{\calM_{\Dr,\infty}}^{\sm}$ consisting of sections that are smooth under the $\check{G}$-action, these sections are locally $L$-analytic under the $G$-action. Then one can show that the natural map 
\begin{align*}
    \calO_{\calM_{\LT,\infty}}^{\sm}\ox_C\calO_{\calM_{\Dr,\infty}}^{\sm}\to \calO_{\calM_{\LT,\infty}}^{L\-\lan,\frm^0=0}
\end{align*}
has dense image (Corollary \ref{cor:OOcheckdense}). We also define two differential operators on $\calO_{\calM_{\LT,\infty}}^{L\-\lan,\frm^0=0}$ in the same spirit as in \cite{pan_locally_2022-1}, which is essentially given by taking differentials along the Lubin-Tate spaces of finite level or the Drinfeld spaces of finite level. In order to compute the cohomology of these differential operators, we do the same construction on the Drinfeld side. In particular, there is also a subsheaf $\calO_{\calM_{\Dr,\infty}}^{L\-\lan,\check\frm^0=0}$ on $\calM_{\Dr,\infty}$ consisting of locally $L$-analytic sections for the $\check G$-action, and killed by the horizontal action of $\check{\pi}^{-1}\check\frm^0$ with $\check{\pi}:\calM_{\Dr,\infty}\to \check{\fl}$ the Hodge-Tate period map on the Drinfeld side. We check directly that under the isomorphism $\calO_{\calM_{\LT,\infty}}\isom \calO_{\calM_{\Dr,\infty}}$, there is a $G\times \check G$-equivariant isomorphism (Theorem \ref{thm:isom})
\begin{align*}
    \calO_{\calM_{\LT,\infty}}^{L\-\lan,\frm^0=0}\isom \calO_{\calM_{\Dr,\infty}}^{L\-\lan,\check\frm^0=0}.
\end{align*}
We also identify the differential operators on these sheaves appropriately (Theorem \ref{thm:ddbar}). This allows us to compute the cohomology of these differential operators. From this computation, we can deduce our main result.

\subsubsection*{Acknowledgements}
We sincerely thank Arnaud Vanhaecke, Liang Xiao, Tian Qiu, Yiwen Ding, Yuanyang Jiang, Zhenghui Li, Zhixiang Wu for helpful discussions and comments. We also thank Gabriel Dospinescu for his encouragements. We are especially grateful to my PhD supervisor, Yiwen Ding, for his invaluable guidance in improving my writing skills.

\subsubsection*{Notation and conventions}
Fix a prime number $p$. Let $L$ be a finite extension of $\bbQ_p$ (this is the base field), with ring of integers $\calO_L$. We fix a uniformizer $\varpi$ of $L$, and $|\calO_L/\varpi\calO_L|=q$. Fix an algebraic closure $\bar L$ of $L$, and let $C$ be the $p$-adic completion of $\bar L$. Let $\breve{L}$ be the $p$-adic completion of the maximal unramified extension of $L$ inside $C$.

\subsection{The flag variety}
Let $d\ge 1$ be an integer. Let $\bbP^d$ be the $d$-dimensional projective space over $L$. We identify it with $\bbP^d\isom (\bbA^{d+1}\bs\{0\})/\bbG_m$ where $\bbG_m$ acts on $\bbA^{d+1}\bs\{0\}$ by dilation. Let $o\in\bbP^d(L)$ be a distinguished $L$-rational point in $\bbP^d$. After choosing coordinates on $\bbP^d$, we may write $o=[0:0:\cdots:0:1]$. Let $\GL_{d+1,L}$ be the reductive group over $L$ consisting of invertible $(d+1)\times (d+1)$ matrices. It acts on $\bbP^d$ from the right:
\begin{align*}
    [z_0:z_1:\cdots:z_{d}]g:=[(z_0,z_1,...,z_d)g].
\end{align*}
where $v\mapsto [v]$ means passing to the $\bbG_m$-orbit of $v\in\bbA^{d+1}$. Under this right $\GL_{d+1,L}$-action, $\bbP^d$ is identified with $\bbP^d\isom P_{d,1}\bs \GL_{d+1,L}$, where $P_{d,1}\subset \GL_{d+1,L}$ is the subgroup consisting of matrices of the form $\left( \begin{matrix} A&\alpha\\0&d \end{matrix} \right)$, with $A\in \GL_{d,L}$, $\alpha\in \bbA^d_L$ and $d\in \bbG_{m,L}$. By transposition $g\mapsto g^t$, we get a left action of $\GL_{d+1,L}$ on $\bbP^d$:
\begin{align*}
    g[z_0:z_1:\cdots:z_d]^t:=[g(z_0,z_1,...,z_d)^t].
\end{align*}
This left action identifies $\bbP^d\isom \GL_{d+1,L}/\bar P_{d,1}$, where $\bar P_{d,1}$ is the opposite parabolic of $\bbP_{d,1}$. 

There are some equivariant vector bundles on $\bbP^d$ associated to some Lie subalgebras of $\frg$, where $\frg=\gl_{d+1}(L)$ is the Lie algebra of $\GL_{d+1,L}$. Let $\frp$ be the Lie algebra of $P_{d,1}$, with $\frn$ the nilpotent radical of $\frp$ and $\frm$ the Levi complement. Let $\bar\frp$ be the opposite parabolic of $\frp$, with $\bar\frn$ be the nilpotent radical of $\bar\frp$. Let $\fl$ be the adic space associated to $\bbP^d_L\times_{L,\iota}C$. Let $\frg^0:=\calO_{\fl}\ox \frg$ and we view it as an Lie algebroid over $\fl$ such that $[f\ox X,g\ox Y]=fX(g)\ox Y-gY(f)\ox X+fg\ox [X,Y]$ where $f,g\in \calO_{\fl}$ and $X,Y\in \frg$. Let $\frp^0\subset\frg^0$ be the universal parabolic subalgebra on $\fl$, with $\frn^0$ the universal nilpotent subalgebra. More precisely,
\begin{itemize}
    \item $\frp^0$ is the subbundle of $\frg^0$ consisting of vector fields $\ffrx\in\frg^0$ such that $\ffrx_x\in \frp_x$, where $\frp_x$ is the parabolic subalgebra defined by the point $x$. If we choose a lift $\tilde{x}$ of $x$ into $\GL_{d+1}(L)$, then $\frp_x=\tilde{x}^{-1}\frp \tilde{x}$.
    \item Similarly, $\frn^0$ is the subbundle of $\frg^0$ consisting of vector fields $\ffrx\in\frg^0$ such that $\ffrx_x\in \frn_x$, where $\frn_x$ is the nilradical of the parabolic subalgebra defined by the point $x$.
\end{itemize}
We note that the Lie algebroids $\frp^0\supset\frn^0$ are ideals of $\frg^0$.

Write $G=\GL_{d+1}(L)$. Let $D=D_{L,\frac{1}{d+1}}$ be the division algebra over $L$ of invariant $\frac{1}{d+1}$. The set of invertible elements in $D$ defines a reductive group $\check\bbG$ over $L$. We put $\check{G}:=\check{\bbG}(L)$ and view it as a locally $L$-analytic group. Let $\check\frg$ be the Lie algebra of $\check\bbG$. Fix an embedding $\iota:L\inj C$, and we fix an isomorphism 
\begin{align}\label{ggcheckisom}
    \frg\ox_{L,\iota}C\isom \check\frg\ox_{L,\iota}C.\tag{$*$}
\end{align}
Let $\check{X}$ be the Brauer-Severi variety over $L$ associated to $D$, and we let $\check{\fl}$ be the adic space associated to $\check X\times_{L,\iota}C$. Under the fixed isomorphism (\ref{ggcheckisom}), the parabolic subgroup $\frp\ox_{L,\iota}C\subset \frg\ox_{L,\iota}C$ also defines a parabolic subgroup $\check{\frp}$ of $\check{\frg}\ox_{L,\iota}C$. Similarly we get Lie algebroids $\check{\frg}^0\supset\check{\frp}^0\supset\check\frn^0$ on $\check{\fl}$.

\subsection{The Lubin-Tate space at infinite level}

Let $H_0$ be a one-dimensional compatible $\calO_L$-module over $\bar\bbF_q$ of $\calO_L$-height $d+1$, which is unique up to quasi-isogeny. Let $\Nilp_{\calO_{\breve{L}}}$ be the category of $\calO_{\breve{L}}$-algebras such that $\varpi$ is nilpotent. Let $\ffrm_{\LT,0}$ be the functor from $\Nilp_{\calO_{\breve{L}}}$ to the category of sets, assigning an $\calO_{\breve{L}}$-algebra $R$ to the set of equivalence classes of pairs $(G,\rho)$, where $G$ is a compatible $p$-divisible $\calO_L$-module over $R$ and $\rho:H_0\ox_{\bar\bbF_q}R/\varpi\dasharrow G\ox_R R/\varpi$ is an $\calO_L$-equivariant quasi-isogeny. Here the property of being compatible means that the natural $\calO_L$-action on $\Lie(G)$ induced by the $\calO_L$-action on $G$, coincides with the $\calO_L$-action induced by the natural action of $R$ on $\Lie(G)$.  By \cite{rapoport_period_1996}, this functor is represented by a formal scheme over $\Spf(\calO_{\breve{L}})$. We denote by $\calM_{\LT,0}$ the base change of the adic generic fiber of $\ffrm_{\LT,0}$ from $\breve L$ to $C$.

Let $(\calG,\rho)$ be the universal deformation of $H_0$ on $\ffrm_{\LT,0}$. The $\varpi$-adic Tate module of $\calG$ defines a $\bbZ_p$-local system on the \'etale site of $\calM_{\LT,0}$, which we denoted it by $V_{\LT}$. For any $n\ge 0$, by considering trivializations of $V_{\LT}/\varpi^nV_{\LT}$, we get an \'etale Galois covering $\calM_{\LT,n}$ of $\calM_{\LT,0}$ with Galois group $\GL_{d+1}(\calO_L/\varpi^n\calO_L)$. By \cite{scholze_moduli_2013}, there exists a perfectoid space $\calM_{\LT,\infty}$ over $C$ such that 
\[
    \calM_{\LT,\infty}\sim\ilim_n\calM_{\LT,n}.
\]
There is a natural continuous right action of $\GL_{d+1}(\calO_L)$ on $\calM_{\LT,\infty}$, which can be extended to $G=\GL_{d+1}(L)$. Also $\check{G}$ acts on each layer $\calM_{\LT,n}$, $n\ge 0$. According to the height of the quasi-isogeny in the moduli problem, the space $\calM_{\LT,\infty}$ decomposes as a disjoint union $\calM_{\LT,\infty}\isom \sqcup_{i\in\bbZ}\calM_{\LT,\infty}^{(i)}$. Let $G^0=\{g\in G:v_p(\det g)=0\}$, then this decomposition is $G^0$-equivariant. The Hodge filtration of the Dieudonn\'e module of the universal $p$-divisible group $(\calG,\rho)$ defines the Gross-Hopkins period map \cite{hopkins_equivariant_1994}
\begin{align*}
    \pi_{\LT,\GM}:\calM_{\LT,\infty}\to \check{\fl}.
\end{align*}
This map factors as the composition of the projection $\calM_{\LT,\infty}\to \calM_{\LT,0}$ with $\pi_{\LT,\GM,0}:\calM_{\LT,0}\to \check{\fl}$.
The map $\pi_{\LT,\GM,0}$ is an \'etale map \cite[Proposition 7.2]{de_jong_etale_1995}, \cite{achinger_variants_2022}, and admits local sections. Besides, the Hodge filtration of the rational Tate module of the universal $p$-divisible group $(\calG,\rho)$ defines the Hodge-Tate period map \cite{scholze_moduli_2013}
\begin{align*}
    \pi_{\LT,\HT}:\calM_{\LT,\infty}\to {\fl}.
\end{align*}
These maps are equivariant for the $G\times \check{G}$-action. By \cite[Proposition 2.22]{scholze_perfectoid_2013}, we know there is a basis $\ffrb_{\LT}$ for the analytic topology on $\calM_{\LT,\infty}$, such that for any $U\in\ffrb_{\LT}$:
\begin{enumerate}[(i)]
    \item $U$ is affinoid perfectoid.
    \item $U$ is the preimage of some affinoid subset $U_n\subset \calM_{\LT,n}$ for some $n$.
    \item For any $m\ge n$, let $U_m$ be the preimage of $U_n$. Then the natural map 
    \begin{align*}
        \dlim_{m\ge n}\calO_{\calM_{\LT,m}}(U_m)\to \calO_{\calM_{\LT,\infty}}(U)
    \end{align*}
    has dense image.
\end{enumerate}
Here we note as $U\in\ffrb_{\LT}$ is the preimage of some affinoid open subset of $\calM_{\LT,n}$, there exists a sufficiently small open compact subgroup of $G$ acting on $U$ by \cite[Lemma 2.2]{ScholzeLubinTateTower}.

Let $\calO_{\calM_{\LT,\infty}}$ be the completed structure sheaf of $\calM_{\LT,\infty}$. Define $\calO^{L\-\lan}_{\calM_{\LT,\infty}}\subset\calO_{\calM_{\LT,\infty}}$ to be the subsheaf consisting of locally $L$-analytic sections for the $G$-action. More precisely, for $U\in\ffrb_{\LT}$, there exists an open compact subgroup $G_U$ of $G$ acting on $U$. Then we put $\calO^{L\-\lan}_{\calM_{\LT,\infty}}(U):=\calO_{\calM_{\LT,\infty}}(U)^{(G_U,L)\-\lan}$. As taking locally $L$-analytic vectors is a left exact functor, we see $\calO^{L\-\lan}_{\calM_{\LT,\infty}}\subset\calO_{\calM_{\LT,\infty}}$ is indeed a subsheaf of $\calO_{\calM_{\LT,\infty}}$. Similarly we define $\calO_{\calM_{\LT,\infty}}^{\sm}$ as the subsheaf consisting of $G$-smooth sections. By \cite[Lemma 2.3.4]{camargo_geometric_2022}, the image of the natural map $\pi^{-1}\calO_{\fl}\to \calO_{\calM_{\LT,\infty}}$ lies in the subsheaf $\calO^{L\-\lan}_{\calM_{\LT,\infty}}$. Write $\pi=\pi_{\LT,\HT}$. There is a $G$-equivariant action of $\pi^{-1}\frg^0$ on $\calO^{L\-\lan}_{\calM_{\LT,\infty}}$ given by derivation. By the geometric Sen theory \cite[Theorem 4.3.3]{dospinescu2024jacquetlanglandsfunctorpadiclocally}, we know $\calO^{L\-\lan}_{\calM_{\LT,\infty}}$ is killed by $\pi^{-1}\frn^0$. Hence we get an action of $\pi^{-1}\frm^0$ on $\calO^{L\-\lan}_{\calM_{\LT,\infty}}$. Let $\calO^{L\-\lan,\frm^0=0}_{\calM_{\LT,\infty}}\subset\calO^{L\-\lan}_{\calM_{\LT,\infty}}$ be the subsheaf such that the action of $\pi^{-1}\frm^0$ is trivial.  As $\frm^0=\frp^0/\frn^0\subset \frg^0/\frn^0$ is an ideal and $G$ is connected, we see $\calO^{L\-\lan,\frm^0=0}_{\calM_{\LT,\infty}}$ is a $G$-equivariant subsheaf of $\calO^{L\-\lan}_{\calM_{\LT,\infty}}$. We concern on the local structure of $\calO^{L\-\lan,\frm^0=0}_{\calM_{\LT,\infty}}$. Roughly speaking, we will show $\calO^{L\-\lan,\frm^0=0}_{\calM_{\LT,\infty}}$ is a completed tensor product of $\pi^{-1}\calO_{\fl}$ with $\calO_{\calM_{\LT,\infty}}^{\sm}$ over $C$.

Let $z_i$ for $i=0,1,...,d$ be the coordinates on $\fl$. Let $U\in\ffrb_{\LT}$ such that $z_d\neq 0$ on $U$. As the image of $\pi:\calM_{\LT,\infty}\to \fl$ lies in the Drinfeld space $\Omega^d\subset\fl$, the assumption $z_d\neq 0$ on $U$ is automatically satisfied. For $i=0,1,...,d-1$, put $x_i=\frac{z_{i}}{z_d}\in (\pi^{-1}\calO_{\fl})(U)$.  As $\calO_{\calM_{\LT,\infty}}^{\sm}(U)\subset \calO_{\calM_{\LT,\infty}}(U)$ has dense image, there exists $x_{i,n}\in \calO_{\calM_{\LT,\infty}}^{\sm}(U)$ for $i=0,1,...,d-1$ such that $||x_i-x_{i,n}||\le p^{-n}$. For $i,j=0,1,...,d$, let $E_{ij}\in\frg $ be the matrix such that only the $(i,j)$-entry is $1$ and others are $0$. As $E_{d,i}z_i=z_d$ and $E_{d,i}z_d=0$, a direct computation shows $E_{d,i}.x_i=1$.

\begin{theorem}\label{thm:structureofOm0}
Let $U\in\ffrb_{\LT}$ such that $z_d\neq 0$ on $U$. Let $s\in \calO^{L\-\lan,\frm^0=0}_{\calM_{\LT,\infty}}(U)$. Then there exists a sufficiently large integer $n\ge 0$, such that
\begin{align*}
    s=\sum_{i_0=0}^\infty\sum_{i_1=0}^\infty\cdots\sum_{i_{d-1}=0}^\infty c_{i_0...i_{d-1}}(x_{0}-x_{0,n})^{i_0}\cdots(x_{d-1}-x_{d-1,n})^{i_{d-1}}
\end{align*}
where $c_{i_0...i_{d-1}}\in\calO^{\sm}_{\calM_{\LT,\infty}}(U)$ and $c_{i_0...i_{d-1}}p^{n(i_0+\cdots i_{d-1})}\to 0$. 
\begin{proof}
The idea is to expand $s$ along $E_{d,i}$ for $i=0,1,...,d-1$, following \cite[4.3.5]{pan_locally_2022}. Let $s\in \calO^{L\-\lan,\frm^0=0}_{\calM_{\LT,\infty}}(U)$. For each $i=0,1,...,d-1$, define 
\begin{align*}
    D_i(s):=\sum_{l=0}^\infty(-1)^l(x_i-x_{i,n})^l\frac{(E_{d,i})^l.s}{l!}
\end{align*}
which converges in $\calO_{\calM_{\LT,\infty}}^{L\-\lan}(U)$. Later we will show $D_i(s)$ acturally lies in the smaller subspace $\calO_{\calM_{\LT,\infty}}^{L\-\lan,\frm^0=0}(U)$.
Using $E_{d,i}(x_i-x_{i,n})=1$, a direct computation shows $E_{d,i}.D_i(s)=0$ for all $s\in \calO^{L\-\lan,\frm^0=0}_{\calM_{\LT,\infty}}(U)$, and 
\begin{align*}
    s=\sum_{l=0}^\infty a_{i,l}(x_{i}-x_{i,n})^l
\end{align*}
with $a_{i,l}=D_i(\frac{(E_{d,i})^l.s}{l!})$. Here we note the coefficients $a_{i,l}$ are determined by $s$. From the construction we know the coefficients $a_{i,l}$ are killed by $E_{d,i}$ for all $l$. For $j\neq i$, we furthure express the coefficients $a_{i,l}$ along $E_{d,j}$, namely 
\begin{align*}
    a_{i,l}=\sum_{k=0}^\infty b_{i,l,j,l'}(x_j-x_{j,n})^{l'}
\end{align*}
with $b_{i,l,j,l'}=D_{j}(\frac{(E_{d,j})^{l'}.a_{i,l}}{l'!})$. As $[E_{d,i},E_{d,j}]=0$ for $i,j=0,1,...,d-1$, we see $b_{i,l,j,l'}$ is killed by $E_{d,i}$ and $E_{d,j}$. After we expand $s$ along each derivation $E_{d,i}$ for $i=0,1,...,d-1$, we arrive at an expression 
\begin{align*}
    s=\sum_{i_0=0}^\infty\sum_{i_1=0}^\infty\cdots\sum_{i_{d-1}=0}^\infty c_{i_0...i_{d-1}}(x_{0}-x_{0,n})^{i_0}\cdots(x_{d-1}-x_{d-1,n})^{i_{d-1}}
\end{align*}
such that $E_{d,i}c_{i_0...i_{d-1}}=0$ for all $i=0,1,...,d-1$. As $\frm^0$ acts trivially on $\calO_{\fl}$, we know $\frm^0$ acts trivially on $x_i-x_{i,n}$ for any $i=0,...,d-1$. Therefore,
\begin{align*}
    0=\sum_{i_0,...,i_{d-1}}(\frm^0 c_{i_0...i_{d-1}})(x_{0}-x_{0,n})^{i_0}\cdots(x_{d-1}-x_{d-1,n})^{i_{d-1}}
\end{align*}
This implies $\frm^0 c_{i_0...i_{d-1}}=0$. Indeed, the coefficients $c_{i_0...i_{d-1}}$ are determined by $s$. Combined this with the fact that $\calO_{\calM_{\LT,\infty}}^{L\-\lan}$ is killed by $\pi^{-1}\frn^0$, we see $\frp^0$ kills $c_{i_0...i_{d-1}}$. Hence there is an induced action of $\bar\frn^0=\frg^0/\frp^0$ on $c_{i_0...i_{d-1}}$. On $U$, $\bar\frn^0$ is generated by $E_{d,i}$ for $i=0,1,...,d-1$. Indeed, on the open locus $V_o=\{[z_0:z_1:\cdots:z_{d-1}:1]\}\subset\bbP^d$, the matrix $Z=\left( \begin{matrix} I_{d\times d}&0\\ (z_0,z_1,...,z_{d-1})&1 \end{matrix} \right)$ is a lifting of the point $z=[z_0:z_1:...:z_{d-1}:1]$ as $oZ=z$ where $o=[0:0:\cdots:0:1]$. Then $Z^{-1}E_{d,i}Z=E_{d,i}$ for $i=0,1,...,d$ generates $\bar\frn^0$ on $V_o$. From the construction we know $c_{i_0...i_{d-1}}$ is killed by $E_{d,i}$ for all $i=0,1,...,d-1$, hence it is killed by $\bar \frn^0$. This shows that $\frg^0$ acts trvially on $c_{i_0...i_{d-1}}$, which implies $c_{i_0...i_{d-1}}\in\calO^{\sm}_{\calM_{\LT,\infty}}(U)$.
\end{proof}
\end{theorem}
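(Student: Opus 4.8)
The plan is to follow the Taylor‑expansion technique of Pan \cite[4.3.5]{pan_locally_2022}, using the $d$ pairwise‑commuting nilpotent operators $E_{d,0},\dots,E_{d,d-1}\in\frg$, which act on $\calO^{L\-\lan}_{\calM_{\LT,\infty}}(U)$ through the $\pi^{-1}\frg^0$‑action and which satisfy $E_{d,i}.(x_i-x_{i,n})=1$ (since $x_{i,n}$ is $G$‑smooth and $E_{d,i}.x_i=1$) and $[E_{d,i},E_{d,j}]=0$. First I would fix, depending on $s$, an integer $n$ large enough that $\|(E_{d,i})^l.s\|/l!$ grows strictly more slowly than $p^{nl}$ for every $i$; this is possible precisely because $s$ is locally $L$‑analytic on $U$, and by density of $\calO^{\sm}_{\calM_{\LT,\infty}}(U)$ one then has $x_{i,n}\in\calO^{\sm}_{\calM_{\LT,\infty}}(U)$ with $\|x_i-x_{i,n}\|\le p^{-n}$. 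For such $n$ the operator
\[
D_i(s):=\sum_{l=0}^{\infty}(-1)^l(x_i-x_{i,n})^l\frac{(E_{d,i})^l.s}{l!}
\]
converges in $\calO^{L\-\lan}_{\calM_{\LT,\infty}}(U)$, since $\|(x_i-x_{i,n})^l\|\le p^{-nl}$ outpaces the growth of the coefficients. Using $E_{d,i}.(x_i-x_{i,n})=1$ and the Leibniz rule, a telescoping computation gives $E_{d,i}.D_i(s)=0$, and the Taylor‑type identity $s=\sum_{l\ge 0}a_{i,l}(x_i-x_{i,n})^l$ with $a_{i,l}=D_i\big((E_{d,i})^l.s/l!\big)$ killed by $E_{d,i}$; as the $E_{d,j}$ with $j\neq i$ commute with $E_{d,i}$, they still act on the $a_{i,l}$, so the construction can be repeated.

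Next I would iterate the projections $D_0,D_1,\dots,D_{d-1}$ in turn. Because the $E_{d,i}$ pairwise commute, after the $k$‑th step the coefficients produced so far remain killed by $E_{d,0},\dots,E_{d,k-1}$, and after $d$ steps one reaches
\[
s=\sum_{i_0=0}^{\infty}\cdots\sum_{i_{d-1}=0}^{\infty}c_{i_0\cdots i_{d-1}}(x_0-x_{0,n})^{i_0}\cdots(x_{d-1}-x_{d-1,n})^{i_{d-1}}
\]
with every $c_{i_0\cdots i_{d-1}}$ killed by all of $E_{d,0},\dots,E_{d,d-1}$; moreover each $D_i$ is a projection onto the $E_{d,i}$‑invariants (it extracts the ``constant term''), so the $c_{i_0\cdots i_{d-1}}$ are uniquely determined by $s$. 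I expect the main technical obstacle to be tracking norms through this iteration so as to land on exactly the estimate in the statement: one has to check that each $D_i$ costs only a bounded, $n$‑independent loss of analyticity --- which is why $n$ is chosen large once and for all at the start --- while the monomial of degree $i_k$ in $x_k-x_{k,n}$ supplies a clean factor $p^{-ni_k}$, so that altogether $\|c_{i_0\cdots i_{d-1}}\|\,p^{n(i_0+\cdots+i_{d-1})}\to 0$.

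It remains to show each $c_{i_0\cdots i_{d-1}}$ is $G$‑smooth. Since $\pi^{-1}\frm^0$ kills $\pi^{-1}\calO_{\fl}$ (a section of $\frp^0$ induces a vector field on $\fl$ vanishing at every point, as $\frp_x$ is the stabilizer of $x$) and kills $G$‑smooth sections (these are infinitesimally fixed for the $G$‑action), it annihilates every $x_i-x_{i,n}$; applying $\frm^0$ to the displayed expansion of $s$ and using $\frm^0.s=0$ together with the uniqueness of the coefficients forces $\frm^0.c_{i_0\cdots i_{d-1}}=0$ (this also shows, a posteriori, that each $D_i(s)$ lies in $\calO^{L\-\lan,\frm^0=0}_{\calM_{\LT,\infty}}(U)$). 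Combining this with the vanishing of the $\pi^{-1}\frn^0$‑action on all of $\calO^{L\-\lan}_{\calM_{\LT,\infty}}$ coming from geometric Sen theory \cite{camargo_geometric_2022}, we get that $\frp^0$ kills each $c_{i_0\cdots i_{d-1}}$, so there is a well‑defined action of $\bar\frn^0=\frg^0/\frp^0$ on the coefficients. Finally, on $U$ --- which lies in the affine chart $V_o=\{[z_0:\cdots:z_{d-1}:1]\}$ --- the explicit lift $Z=\left(\begin{matrix}I_d&0\\(z_0,\dots,z_{d-1})&1\end{matrix}\right)$ of $[z_0:\cdots:z_{d-1}:1]$, for which $Z^{-1}E_{d,i}Z=E_{d,i}$, shows that $\bar\frn^0$ is generated by $E_{d,0},\dots,E_{d,d-1}$; since these already annihilate the coefficients, all of $\bar\frn^0$ and hence all of $\frg^0=\frp^0\oplus\bar\frn^0$ kills them, so each $c_{i_0\cdots i_{d-1}}$ is fixed by a small open subgroup of $G$, i.e. lies in $\calO^{\sm}_{\calM_{\LT,\infty}}(U)$, as desired.
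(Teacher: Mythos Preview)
Your proposal is correct and follows essentially the same route as the paper: the same Taylor-type projections $D_i$ built from $E_{d,i}$, the same iteration using $[E_{d,i},E_{d,j}]=0$, and the same endgame showing $\frp^0$ and then $\bar\frn^0$ (via the explicit lift $Z$) annihilate the coefficients. If anything, you are slightly more careful than the paper about choosing $n$ and tracking norms through the iteration, which the paper leaves implicit.
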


\begin{corollary}\label{cor:OsmOfldense}
For any $U\in\ffrb_{\LT}$, the image of $\calO^{\sm}_{\calM_{\LT,\infty}}(U)\ox_C\pi^{-1}\calO_{\fl}(U)$ inside $\calO^{L\-\lan,\frm^0=0}_{\calM_{\LT,\infty}}(U)$ is dense.
\begin{proof}
This follows directly from Theorem \ref{thm:structureofOm0}.
\end{proof}
\end{corollary}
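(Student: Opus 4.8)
The plan is to deduce the corollary directly from the explicit expansion provided by Theorem~\ref{thm:structureofOm0}. Fix $U\in\ffrb_{\LT}$; since the Hodge--Tate period map factors through $\Omega^d\subset\fl$, the coordinate $z_d$ is nowhere zero on $U$, so the functions $x_i=z_i/z_d\in(\pi^{-1}\calO_{\fl})(U)$ and the smooth approximations $x_{i,n}\in\calO^{\sm}_{\calM_{\LT,\infty}}(U)$ introduced before Theorem~\ref{thm:structureofOm0} are available. Write $A=\calO^{\sm}_{\calM_{\LT,\infty}}(U)$ and $B=\pi^{-1}\calO_{\fl}(U)$, and let $R$ denote the image of the multiplication map $A\ox_C B\to\calO^{L\-\lan,\frm^0=0}_{\calM_{\LT,\infty}}(U)$. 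First I would record two easy facts: that this map indeed takes values in $\calO^{L\-\lan,\frm^0=0}_{\calM_{\LT,\infty}}(U)$ — smooth sections are locally $L$-analytic and killed by $\pi^{-1}\frg^0$, sections from $\pi^{-1}\calO_{\fl}$ are locally $L$-analytic and killed by $\pi^{-1}\frm^0$ since $\frm^0$ acts trivially on $\calO_{\fl}$, and $\frm^0$ acts by derivations so the product of two sections it kills is again killed; and that $R$ is a $C$-subalgebra of $\calO_{\calM_{\LT,\infty}}(U)$, since $A$ and $B$ are, so $R$ is stable under multiplication and contains both $A$ and $B$.

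Next, let $s\in\calO^{L\-\lan,\frm^0=0}_{\calM_{\LT,\infty}}(U)$ be arbitrary. Theorem~\ref{thm:structureofOm0} produces an integer $n\ge 0$ and coefficients $c_{i_0\dots i_{d-1}}\in A$ with $c_{i_0\dots i_{d-1}}\,p^{n(i_0+\cdots+i_{d-1})}\to 0$ such that
\[
s=\sum_{i_0,\dots,i_{d-1}\ge 0} c_{i_0\dots i_{d-1}}(x_0-x_{0,n})^{i_0}\cdots(x_{d-1}-x_{d-1,n})^{i_{d-1}}.
\]
Each difference $x_i-x_{i,n}$ lies in $R$ (it is the image of $1\ox x_i-x_{i,n}\ox 1$), hence every monomial $c_{i_0\dots i_{d-1}}(x_0-x_{0,n})^{i_0}\cdots(x_{d-1}-x_{d-1,n})^{i_{d-1}}$ lies in $R$ because $R$ is a subalgebra containing $A$. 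Therefore every finite truncation $\sum_{i_0+\cdots+i_{d-1}\le N}c_{i_0\dots i_{d-1}}(x_0-x_{0,n})^{i_0}\cdots(x_{d-1}-x_{d-1,n})^{i_{d-1}}$ of the series lies in $R$. Using $\|x_i-x_{i,n}\|\le p^{-n}$ together with $\|c_{i_0\dots i_{d-1}}\|\,p^{-n(i_0+\cdots+i_{d-1})}\to 0$, these truncations converge to $s$ in the Banach topology of $\calO_{\calM_{\LT,\infty}}(U)$, and hence in the topology on $\calO^{L\-\lan,\frm^0=0}_{\calM_{\LT,\infty}}(U)$ relative to which density is meant. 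Since $s$ was arbitrary, $R$ is dense, which is the assertion.

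I expect no genuine obstacle: the entire mathematical content sits in Theorem~\ref{thm:structureofOm0}, and the corollary amounts to recognizing the partial sums of the expansion as elements of the algebraic image of $A\ox_C B$. The only point deserving a line of care is the bookkeeping about topologies — one must check that convergence in the sense asserted by Theorem~\ref{thm:structureofOm0} (i.e.\ in $\calO_{\calM_{\LT,\infty}}(U)$) implies convergence in whatever topology on $\calO^{L\-\lan,\frm^0=0}_{\calM_{\LT,\infty}}(U)$ the word ``dense'' refers to, be it the subspace topology from $\calO_{\calM_{\LT,\infty}}(U)$ or the natural Banach structure on locally $L$-analytic vectors. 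Once that is fixed, the argument is immediate.
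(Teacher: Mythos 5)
Your proof is correct and is essentially the same argument the paper intends: Theorem \ref{thm:structureofOm0} gives the convergent power-series expansion with coefficients in $\calO^{\sm}_{\calM_{\LT,\infty}}(U)$ and variables $x_i-x_{i,n}$ built from $\pi^{-1}\calO_{\fl}(U)$ and $\calO^{\sm}_{\calM_{\LT,\infty}}(U)$, so the partial sums lie in the image of $\calO^{\sm}_{\calM_{\LT,\infty}}(U)\ox_C\pi^{-1}\calO_{\fl}(U)$ and converge to $s$. Your additional observations (that $z_d\neq 0$ holds automatically on any $U\in\ffrb_{\LT}$ since $\pi$ factors through $\Omega^d$, and that the multiplication map really lands in $\calO^{L\text{-}\lan,\frm^0=0}_{\calM_{\LT,\infty}}$) are exactly the points one needs to make the paper's one-line proof precise.
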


To simplify the notation, we denote $\calO_{\LT}$ by the sheaf $\calO^{L\-\lan,\frm^0=0}_{\calM_{\LT,\infty}}$ on $\calM_{\LT,\infty}$. Put $\calO_{\LT}^{\sm}:=\calO^{G\-\sm}_{\calM_{\LT,\infty}}$. Let $\Omega_{\calM_{\LT,n}}^k$ be the sheaf of $k$-differential forms on $\calM_{\LT,n}$ for $k=0,1,...,d$ and put $\Omega_{\LT}^{k,\sm}:=\dlim_n\pi_n^{-1}\Omega_{\calM_{\LT,n}}^k$ with $\pi_n:\calM_{\LT,\infty}\to \calM_{\LT,n}$. Clearly $\calO_{\LT}^{\sm}=\Omega_{\LT}^{0,\sm}$ and $\Omega_{\LT}^{k,\sm}=\wedge^k_{\calO_{\LT}^{\sm}}\Omega_{\LT}^{1,\sm}$.
\begin{proposition}\label{prop:dLT}
There exists a differential operator $d:\calO_{\LT}\to \calO_{\LT}\ox_{\calO_{\LT}^{\sm}}\Omega^{1,\sm}_{\LT}$, such that
\begin{enumerate}[(i)]
    \item $d$ is given by the usual derivation on $\calO^{\sm}_{\LT}$, 
    \item $d$ is $\pi^{-1}\calO_{\fl}$-linear.
\end{enumerate}
Moreover, $d$ is uniquely determined by these two properties up to constants.
\begin{proof}
Define $d|_{\calO_{\LT}^{\sm}}$ as the differential map on finite levels $\calO_{\LT}^{\sm}\to \Omega^{1,\sm}_{\LT}$, and define $d|_{\pi^{-1}\calO_{\fl}}$ to be the zero map. By Theorem \ref{thm:structureofOm0}, for any $U\in\ffrb_{\LT}$ such that $z_d\neq 0$ on $U$, we may write any section $s\in \calO_{\LT}(U)$ of the form for some sufficiently large $n$:
\begin{align*}
    s=\sum_{i_0=0}^\infty\sum_{i_1=0}^\infty\cdots\sum_{i_{d-1}=0}^\infty c_{i_0...i_{d-1}}(x_{0}-x_{0,n})^{i_0}\cdots(x_{d-1}-x_{d-1,n})^{i_{d-1}}
\end{align*}
where $c_{i_0...i_{d-1}}\in\calO^{\sm}_{\LT}(U)$ and $c_{i_0...i_{d-1}}p^{n(i_0+\cdots i_{d-1})}\to 0$. Then we define $d(s)$ using the Lebnitz rule:
\begin{align*}
    d(s)=\sum_{i_0=0}^\infty\sum_{i_1=0}^\infty\cdots\sum_{i_{d-1}=0}^\infty d(c_{i_0...i_{d-1}})(x_{0}-x_{0,n})^{i_0}\cdots(x_{d-1}-x_{d-1,n})^{i_{d-1}}\\-\sum_{j=0}^{d-1}(x_{0}-x_{0,n})^{i_0}\cdots (i_j(x_j-x_{j,n})^{i_j-1}dx_{j,n})\cdots(x_{d-1}-x_{d-1,n})^{i_{d-1}}.
\end{align*}
As $d:\calO_{\LT}^{\sm}(U)\to \Omega^{1,\sm}_{\LT}(U)$ is continuous, we see the expression defining $d(s)$ converges. Besides, since the image of $\calO^{\sm}_{\LT}(U)\ox_C\pi^{-1}\calO_{\fl}(U)$ is dense in $\calO_{\LT}(U)$ by Corollary \ref{cor:OsmOfldense}, $d(s)$ is independent of the choice of $n$ and $x_{j,n}$ for $j=0,1,...,d-1$. Also $d$ commutes with restriction of $U$ to its rational subsets. In general, one can use $G$-equivariancy to reduce to the case where $z_d\neq 0$ on $U$. This shows the existence and the uniqueness of $d$.
\end{proof}
\end{proposition}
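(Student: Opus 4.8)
The plan is to pin $d$ down on the two ``building blocks'' of $\calO_{\LT}$ and then propagate it by the Leibniz rule using the local power-series description from Theorem~\ref{thm:structureofOm0}. Property~(i) forces $d$ on $\calO^{\sm}_{\LT}=\dlim_n\pi_n^{-1}\calO_{\calM_{\LT,n}}$ to be the colimit of the ordinary de Rham differentials $\calO_{\calM_{\LT,n}}\to\Omega^1_{\calM_{\LT,n}}$, whose target is exactly $\Omega^{1,\sm}_{\LT}$; property~(ii), combined with $d(1)=0$ and the Leibniz rule, forces $d$ to vanish on $\pi^{-1}\calO_{\fl}$. Since by Theorem~\ref{thm:structureofOm0} the sheaf $\calO_{\LT}$ is locally spanned topologically by products of sections of $\calO^{\sm}_{\LT}$ with sections of $\pi^{-1}\calO_{\fl}$, a continuous derivation with these two restrictions (which is what ``differential operator'' means here), if it exists, is unique; the work is to exhibit it.

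Fix $U\in\ffrb_{\LT}$; since $\pi(\calM_{\LT,\infty})\subset\Omega^d$ the coordinate $z_d$ is invertible on $U$, and we have the functions $x_i=z_i/z_d\in(\pi^{-1}\calO_{\fl})(U)$ together with smooth approximants $x_{i,n}\in\calO^{\sm}_{\LT}(U)$ satisfying $\|x_i-x_{i,n}\|\le p^{-n}$. Given $s\in\calO_{\LT}(U)$, Theorem~\ref{thm:structureofOm0} expands $s=\sum_{\vec i}c_{\vec i}\prod_{j=0}^{d-1}(x_j-x_{j,n})^{i_j}$ with $c_{\vec i}\in\calO^{\sm}_{\LT}(U)$ and $c_{\vec i}\,p^{n|\vec i|}\to 0$, and I define $d(s)$ by differentiating this series termwise via the Leibniz rule, using $d(x_j-x_{j,n})=-dx_{j,n}$ (because $dx_j=0$ by~(ii), while $dx_{j,n}$ is the finite-level differential). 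That the resulting series converges in $\calO_{\LT}(U)\ox_{\calO^{\sm}_{\LT}(U)}\Omega^{1,\sm}_{\LT}(U)$ follows from the decay of the $c_{\vec i}$ together with the continuity of $d$ on $\calO^{\sm}_{\LT}(U)$, exactly as for the series defining $s$ itself.

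The crux is that $d(s)$ must not depend on the non-canonical data: while the coefficients $c_{\vec i}$ are determined by $s$, the integer $n$ and the approximants $x_{j,n}$ are not. To handle this, observe that properties~(i), (ii) and the Leibniz rule completely determine the value of $d$ on the subalgebra generated by $\calO^{\sm}_{\LT}(U)$ and $\pi^{-1}\calO_{\fl}(U)$, and that the formula above agrees with this forced value there; since that subalgebra is dense in $\calO_{\LT}(U)$ by Corollary~\ref{cor:OsmOfldense} and $d$ is continuous, $d(s)$ is its unique continuous extension, hence independent of all choices. The formula visibly commutes with restriction to rational subdomains of $U$, so $d$ is a morphism of sheaves on $U$. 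To obtain a single operator on all of $\calM_{\LT,\infty}$ and to see that the construction is independent of the chart $\{z_d\neq 0\}$ used, note that properties~(i), (ii) are stable under the $G$-action, so the locally built operators are automatically $G$-equivariant and agree on overlaps, and hence glue; applying the same density-and-continuity argument to the difference of two operators satisfying~(i), (ii) gives uniqueness (up to constants, as asserted).

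I expect the only real obstacle to be this well-definedness step, and within it the topological bookkeeping: one must be sure that $d$ restricted to the dense subalgebra is continuous for the topology on $\calO_{\LT}(U)$, that the target $\calO_{\LT}(U)\ox_{\calO^{\sm}_{\LT}(U)}\Omega^{1,\sm}_{\LT}(U)$ is complete enough for the extension to live there, and — crucially — that the Leibniz-differentiated series does not spill out of this tensor product into a larger completion, which is precisely where the decay estimate $c_{\vec i}\,p^{n|\vec i|}\to 0$ supplied by Theorem~\ref{thm:structureofOm0} is used. Granting Theorem~\ref{thm:structureofOm0} and Corollary~\ref{cor:OsmOfldense}, the remainder is formal.
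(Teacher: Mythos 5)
Your proof is correct and follows essentially the same route as the paper's: expand $s$ locally via Theorem~\ref{thm:structureofOm0}, differentiate termwise by the Leibniz rule using that $d$ is the finite-level differential on $\calO^{\sm}_{\LT}$ and vanishes on $\pi^{-1}\calO_{\fl}$, establish convergence via the decay of the coefficients and continuity of $d$ on $\calO^{\sm}_{\LT}$, get well-definedness and uniqueness from the density in Corollary~\ref{cor:OsmOfldense}, and reduce to the chart $z_d\neq 0$ by $G$-equivariance. Your explicit remark that (ii) together with $d(1)=0$ and Leibniz forces $d|_{\pi^{-1}\calO_{\fl}}=0$ makes the normalization slightly more transparent but does not change the argument.
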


\begin{corollary}\label{cor:dkLT}
For $k\ge 0$, there exists a differential operator $d^k:\calO_{\LT}\ox_{\calO_{\LT}^{\sm}}\Omega_{\LT}^{k,\sm}\to \calO_{\LT}\ox_{\calO_{\LT}^{\sm}}\Omega_{\LT}^{k+1,\sm}$, such that
\begin{enumerate}[(i)]
    \item the restriction of $d^k$ on $\Omega_{\LT}^{k,\sm}$ is given by the usual derivation $\Omega^{k,\sm}_{\LT}\to \Omega_{\LT}^{k+1,\sm}$, 
    \item $d^k$ is $\pi^{-1}\calO_{\fl}$-linear.
\end{enumerate}
Moreover, $d^k$ is uniquely determined by these two properties up to constants.
\begin{proof}
Let $U\in\ffrb_{\LT}$ such that $z_d\neq 0$ on $U$. For $s\in (\calO_{\LT}\ox_{\calO_{\LT}^{\sm}}\Omega_{\LT}^{k,\sm})(U)$, we may assume $s=\sum_i s_i\ox \omega_i$ for some $s_i\in \calO_{\LT}(U)$ and $\omega_i\in \Omega_{\LT}^{k,\sm}(U)$. Define 
\begin{align*}
    d^k(s)=\sum_i d(s_i)\wedge\omega_i+s_id\omega_i
\end{align*}
where $d(s_i)$ is defined as in Proposition \ref{prop:dLT}. Then one directly check $d^k$ is well-defined and satisfies the desired properties.
\end{proof}
\end{corollary}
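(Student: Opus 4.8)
The plan is to reduce everything to the already-established operator $d$ on $\calO_{\LT}$ from Proposition \ref{prop:dLT} via the Leibniz rule, and to verify well-definedness exactly as in the proof of that proposition, namely by invoking the density of $\calO^{\sm}_{\LT}(U)\ox_C\pi^{-1}\calO_{\fl}(U)$ in $\calO_{\LT}(U)$ (Corollary \ref{cor:OsmOfldense}) together with the fact that $d$ already exists and is continuous. Concretely, I would work on a basis element $U\in\ffrb_{\LT}$ with $z_d\neq 0$ on $U$, write a section of $(\calO_{\LT}\ox_{\calO_{\LT}^{\sm}}\Omega_{\LT}^{k,\sm})(U)$ as a finite sum $s=\sum_i s_i\ox\omega_i$ with $s_i\in\calO_{\LT}(U)$ and $\omega_i\in\Omega_{\LT}^{k,\sm}(U)$, and \emph{define} $d^k(s):=\sum_i\big(d(s_i)\wedge\omega_i+s_i\, d\omega_i\big)$, where $d\omega_i$ is the usual exterior derivative on the finite level $\calM_{\LT,n}$ for an appropriate $n$ (pulled back to $\calM_{\LT,\infty}$), and $d(s_i)$ is the operator of Proposition \ref{prop:dLT}.

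The first thing to check is that this formula is independent of the chosen representation $s=\sum_i s_i\ox\omega_i$. Since $\Omega_{\LT}^{k,\sm}$ is a locally free $\calO_{\LT}^{\sm}$-module, the only relations are generated by $(cs_i)\ox\omega_i=s_i\ox(c\omega_i)$ for $c\in\calO_{\LT}^{\sm}(U)$; applying the formula to both sides and using that $d$ restricted to $\calO_{\LT}^{\sm}$ is the usual derivation (so the ordinary Leibniz rule $d(cs_i)=d(c)s_i+c\,d(s_i)$ holds, with $d(c)\in\Omega^{1,\sm}_{\LT}(U)$) together with $d(c\omega_i)=d(c)\wedge\omega_i+c\,d\omega_i$, the two expressions agree. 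Next one checks $\pi^{-1}\calO_{\fl}$-linearity: for $f\in\pi^{-1}\calO_{\fl}(U)$ we have $d(fs_i)=f\,d(s_i)$ by property (ii) of Proposition \ref{prop:dLT}, and $f$ is $\calO_{\LT}^{\sm}$-central, so $d^k(fs)=f\,d^k(s)$. Property (i) is immediate from the definition since on $\Omega_{\LT}^{k,\sm}$ the element $s_i$ can be taken in $\calO_{\LT}^{\sm}$ and then $d(s_i)$ is the ordinary derivation. Compatibility with restriction to rational subsets of $U$, and the globalization via $G$-equivariance to arbitrary $U\in\ffrb_{\LT}$ (not just those with $z_d\neq 0$), is carried out verbatim as in Proposition \ref{prop:dLT}: the $G$-translates of $\{z_d\neq 0\}$ cover $\fl$, the operators on overlaps agree by uniqueness, and they glue to a global $d^k$.

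For uniqueness up to constants: any operator satisfying (i) and (ii) must, on a section $s=\sum_i s_i\ox\omega_i$ with the $s_i$ written in the convergent power-series form of Theorem \ref{thm:structureofOm0}, be forced by $\pi^{-1}\calO_{\fl}$-linearity and the prescribed value on $\Omega_{\LT}^{k,\sm}$ to equal the formula above after one fixes the normalization on $\calO_{\LT}^{\sm}$ and on $\pi^{-1}\calO_{\fl}$; the ambiguity is exactly the scalar by which $d$ on $\calO_{\LT}^{\sm}$ is rescaled, matching the statement of Proposition \ref{prop:dLT}. I expect the main (though still routine) obstacle to be the bookkeeping of convergence: one must confirm that when $s_i$ is expanded as $\sum c_{i_0\dots i_{d-1}}(x_0-x_{0,n})^{i_0}\cdots(x_{d-1}-x_{d-1,n})^{i_{d-1}}$ with $c_{i_0\dots i_{d-1}}p^{n(i_0+\cdots+i_{d-1})}\to 0$, the termwise-differentiated series for $d^k(s)$ still converges in $(\calO_{\LT}\ox_{\calO_{\LT}^{\sm}}\Omega_{\LT}^{k+1,\sm})(U)$ — but this is already handled inside Proposition \ref{prop:dLT} for $d(s_i)$, and wedging with the fixed smooth forms $\omega_i$ and adding the finitely many smooth terms $s_i\,d\omega_i$ preserves convergence, so nothing new is required.
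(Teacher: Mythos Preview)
Your proposal is correct and follows exactly the paper's approach: define $d^k$ locally via the Leibniz rule $d^k(\sum_i s_i\ox\omega_i)=\sum_i d(s_i)\wedge\omega_i+s_i\,d\omega_i$ using the operator $d$ of Proposition \ref{prop:dLT}, then check well-definedness and the required properties. You have simply spelled out the verification that the paper summarizes as ``one directly check[s]''; in particular your treatment of well-definedness, $\pi^{-1}\calO_{\fl}$-linearity, globalization, and uniqueness matches what the paper leaves implicit.
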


\begin{corollary}
For $k\ge 0$, $d^{k+1}\comp d^k=0$ on $\calO_{\LT}\ox_{\calO_{\LT}^{\sm}}\Omega_{\LT}^{k,\sm}$.
\begin{proof}
As $d^k$ is determined by its restriction on the image of $\Omega_{\LT}^{k,\sm}\ox_C\pi^{-1}\calO_{\fl}$, this follows from $d^{k+1}\comp d^k=0$ on $\Omega_{\LT}^{k,\sm}$.
\end{proof}
\end{corollary}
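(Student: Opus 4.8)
The plan is to recognize that $d^{k+1}\comp d^k$ is again a continuous, $\pi^{-1}\calO_{\fl}$-linear operator that agrees with the classical composite $d\comp d$ on the finite-level forms $\Omega^{k,\sm}_{\LT}$, and then to conclude by the same uniqueness mechanism already exploited in Corollary \ref{cor:dkLT}: such an operator is determined by its restriction to the image of $\Omega^{k,\sm}_{\LT}\ox_C\pi^{-1}\calO_{\fl}$, which is dense.

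First I would record the formal properties. Since $d^k$ and $d^{k+1}$ are $\pi^{-1}\calO_{\fl}$-linear, so is their composite: for $f\in\pi^{-1}\calO_{\fl}(U)$ one has $d^{k+1}(d^k(fs))=d^{k+1}(f\,d^k(s))=f\,d^{k+1}(d^k(s))$; and $d^{k+1}\comp d^k$ is continuous, being a composite of continuous maps. Next I would check that the image of $\Omega^{k,\sm}_{\LT}\ox_C\pi^{-1}\calO_{\fl}$ in $\calO_{\LT}\ox_{\calO_{\LT}^{\sm}}\Omega^{k,\sm}_{\LT}$ is dense. Working on a member $U\in\ffrb_{\LT}$ small enough that $\Omega^{k,\sm}_{\LT}$ is free over $\calO^{\sm}_{\LT}$ on $U$ — which is possible because étale pullback commutes with $\Omega^k$ along the tower and $\ffrb_{\LT}$ is a basis — a choice of $\calO^{\sm}_{\LT}(U)$-basis identifies $(\calO_{\LT}\ox_{\calO_{\LT}^{\sm}}\Omega^{k,\sm}_{\LT})(U)$ with finitely many copies of $\calO_{\LT}(U)$ and $(\Omega^{k,\sm}_{\LT}\ox_C\pi^{-1}\calO_{\fl})(U)$ with the corresponding copies of $\calO^{\sm}_{\LT}(U)\ox_C\pi^{-1}\calO_{\fl}(U)$; density then reduces coordinatewise to Corollary \ref{cor:OsmOfldense}.

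With these in hand, it suffices to evaluate $d^{k+1}\comp d^k$ on a section of the form $f\ox\omega$ with $f\in\pi^{-1}\calO_{\fl}(U)$ and $\omega\in\Omega^{k,\sm}_{\LT}(U)$. Since $df=0$ (Proposition \ref{prop:dLT}) and $d^k$ restricts to the usual exterior derivative on $\Omega^{k,\sm}_{\LT}$, we get $d^k(f\ox\omega)=f\,d\omega$; applying $d^{k+1}$ and using $\pi^{-1}\calO_{\fl}$-linearity together with $d^{k+1}(d\omega)=(d\comp d)(\omega)=0$ on the finite-level complex $\Omega^{\bullet,\sm}_{\LT}=\dlim_n\pi_n^{-1}\Omega^{\bullet}_{\calM_{\LT,n}}$, we obtain $d^{k+1}(d^k(f\ox\omega))=0$. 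By density and continuity this extends to all of $(\calO_{\LT}\ox_{\calO_{\LT}^{\sm}}\Omega^{k,\sm}_{\LT})(U)$; since $\pi$ factors through $\Omega^d$, every $U\in\ffrb_{\LT}$ already satisfies $z_d\neq0$, so no further reduction is needed (and in any case $G$-equivariance handles a general $U$ exactly as in Proposition \ref{prop:dLT}).

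The only point I expect to require care is the topological bookkeeping: one must be sure that $d^k$ and $d^{k+1}$ are continuous for a topology on $\calO_{\LT}\ox_{\calO_{\LT}^{\sm}}\Omega^{k,\sm}_{\LT}$ with respect to which $\Omega^{k,\sm}_{\LT}\ox_C\pi^{-1}\calO_{\fl}$ is dense. This is already built into the convergence estimates of Theorem \ref{thm:structureofOm0} and into the well-definedness assertions of Proposition \ref{prop:dLT} and Corollary \ref{cor:dkLT}, so in practice it is a matter of citing those facts rather than proving anything new; everything else is the formal combination of $\pi^{-1}\calO_{\fl}$-linearity with $d^2=0$ on the tower of finite levels.
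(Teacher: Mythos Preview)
Your proposal is correct and follows essentially the same approach as the paper: reduce to elements in the image of $\Omega_{\LT}^{k,\sm}\ox_C\pi^{-1}\calO_{\fl}$ using density (Corollary \ref{cor:OsmOfldense}) and $\pi^{-1}\calO_{\fl}$-linearity, then invoke $d^{k+1}\comp d^k=0$ on $\Omega_{\LT}^{k,\sm}$. The paper states this in a single sentence, while you have (correctly) unpacked the density and continuity bookkeeping that makes the reduction go through.
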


We also define differential operators on $\calO_{\LT}$ along the flag variety. Let $\Omega^k_{\fl}$ be the sheaf of $k$-differential forms on $\fl$.
\begin{proposition}\label{prop:dbarLT}
For $k\ge 0$, there exists a differential operator $\bar d^k:\calO_{\LT}\ox_{\pi^{-1}\calO_{\fl}}\pi^{-1}\Omega_{\fl}^k\to \calO_{\LT}\ox_{\pi^{-1}\calO_{\fl}}\pi^{-1}\Omega_{\fl}^{k+1}$, such that
\begin{enumerate}[(i)]
    \item The restriction of $\bar d$ on $\pi^{-1}\Omega^k_{\fl}$ is induced by the usual derivation $\Omega^k_{\fl}\to \Omega_{\fl}^{k+1}$,
    \item $\bar d^k$ is $\calO^{\sm}_{\LT}$-linear.
\end{enumerate}
Moreover, $\bar d^k$ is uniquely determined by these two properties up to constants.
\begin{proof}
First we handle the case where $k=0$. As $\frp^0$ acts trivially on $\calO_{\LT}$, we get an action of $\bar\frn^0\isom \frg^0/\frp^0$ on $\calO_{\LT}$. Taking dual, we get 
\begin{align*}
    \bar d:\calO_{\LT}\to \calO_{\LT}\ox_{\pi^{-1}\calO_{\fl}}\pi^{-1}\Omega_{\fl}^1
\end{align*}
as $(\bar\frn^0)^{\vee}\isom \Omega_{\fl}^1$. From the construction we see that $\bar d$ is $\calO^{\sm}_{\LT}$-linear, and $\bar d$ is determined by these two properties by Corollary \ref{cor:OsmOfldense}. Explicitly, for $U\in\ffrb_{\LT}$ with $z_d\neq 0$ on $U$ and $s\in \calO_{\LT}(U)$, we have
\begin{align*}
    \bar d(s)=\sum_{i=0}^{d-1}(E_{d,i}.s)dx_i.
\end{align*}
The general case follows by similar treatments in Corollary \ref{cor:dkLT}.
\end{proof}
\end{proposition}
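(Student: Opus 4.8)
The plan is to construct the degree-zero operator $\bar d=\bar d^0$ directly from the Lie algebroid structure already carried by $\calO_{\LT}$, read off its explicit local formula, and then bootstrap to arbitrary $k$ by a Leibniz extension in exactly the same way Corollary~\ref{cor:dkLT} extended Proposition~\ref{prop:dLT}.

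For $k=0$ I would first note that the $\pi^{-1}\frg^0$-action on $\calO^{L\-\lan}_{\calM_{\LT,\infty}}$ restricts to an action on $\calO_{\LT}$ which kills $\pi^{-1}\frp^0$: it kills $\pi^{-1}\frn^0$ by geometric Sen theory, and it kills $\pi^{-1}\frm^0=\pi^{-1}(\frp^0/\frn^0)$ by the very definition of $\calO_{\LT}=\calO^{L\-\lan,\frm^0=0}_{\calM_{\LT,\infty}}$. Hence the action factors through $\bar\frn^0\isom\frg^0/\frp^0$, which is the tangent sheaf of $\fl$ (so that $(\bar\frn^0)^\vee\isom\Omega^1_{\fl}$). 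Repackaging this as an $\calO_{\LT}$-linear morphism $\bar\frn^0\ox_{\pi^{-1}\calO_{\fl}}\calO_{\LT}\to\calO_{\LT}$ and dualizing yields
\[
\bar d:\calO_{\LT}\longrightarrow \calO_{\LT}\ox_{\pi^{-1}\calO_{\fl}}\pi^{-1}\Omega^1_{\fl}.
\]
To make this explicit I would use that on a chart $U\in\ffrb_{\LT}$ with $z_d\neq 0$ the constant matrices $E_{d,0},\dots,E_{d,d-1}$ span $\bar\frn^0$ over $\pi^{-1}\calO_{\fl}$ (the computation $Z^{-1}E_{d,i}Z=E_{d,i}$ on $V_o$ from the proof of Theorem~\ref{thm:structureofOm0}), and that $E_{d,i}.x_j=\delta_{ij}$; together these give $\bar d(s)=\sum_{i=0}^{d-1}(E_{d,i}.s)\,dx_i$.

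I would then check the two characterizing properties. Property (i) is immediate: $\bar d(x_j)=\sum_i(E_{d,i}.x_j)\,dx_i=dx_j$, so $\bar d|_{\pi^{-1}\calO_{\fl}}=\pi^{-1}d_{\fl}$. For (ii), a $G$-smooth section $c$ is annihilated by the infinitesimal $G$-action, which acts on $\calO^{L\-\lan}_{\calM_{\LT,\infty}}$ by algebra derivations; hence $E_{d,i}.(cs)=c\,(E_{d,i}.s)$, so $\bar d(cs)=c\,\bar d(s)$ and $\bar d$ is $\calO^{\sm}_{\LT}$-linear. For uniqueness, any two operators satisfying (i) and (ii) differ by an $\calO^{\sm}_{\LT}$-linear map that vanishes on $\pi^{-1}\calO_{\fl}$, hence on the image of $\calO^{\sm}_{\LT}\ox_C\pi^{-1}\calO_{\fl}$, which is dense in $\calO_{\LT}$ by Corollary~\ref{cor:OsmOfldense}; since both operators are continuous they agree (with $d_{\fl}$ normalized as usual, which is the role of the ``up to constants'' clause).

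Finally, for general $k\ge 0$ I would extend $\bar d$ by the Leibniz rule just as in Corollary~\ref{cor:dkLT}: writing a local section of $\calO_{\LT}\ox_{\pi^{-1}\calO_{\fl}}\pi^{-1}\Omega^k_{\fl}$ as $\sum_i s_i\ox\omega_i$ with $s_i\in\calO_{\LT}$ and $\omega_i\in\pi^{-1}\Omega^k_{\fl}$, set $\bar d^k\big(\sum_i s_i\ox\omega_i\big)=\sum_i\bar d(s_i)\wedge\omega_i+s_i\,\pi^{-1}(d_{\fl}\omega_i)$; one then checks that this respects the $\pi^{-1}\calO_{\fl}$-balancing relations (using precisely $\bar d|_{\pi^{-1}\calO_{\fl}}=\pi^{-1}d_{\fl}$ and the Leibniz rule for $d_{\fl}$) and is continuous, and reads off (i), (ii) and uniqueness from the case $k=0$ together with Corollary~\ref{cor:OsmOfldense}. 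I expect the only genuinely delicate points to be the identification of the anchor of the $\bar\frn^0$-action on $\pi^{-1}\calO_{\fl}$ with the de Rham differential $d_{\fl}$, and the well-definedness of $\bar d^k$ across $\pi^{-1}\calO_{\fl}$-balanced sums; everything else is the same density-and-continuity pattern already established for $d$.
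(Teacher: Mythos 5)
Your proof is correct and takes essentially the same route as the paper's: construct $\bar d^0$ by dualizing the $\bar\frn^0\isom\frg^0/\frp^0$ action on $\calO_{\LT}$ (using that $\frn^0$ is killed by geometric Sen theory and $\frm^0$ by the definition of $\calO_{\LT}$), read off the explicit local formula $\bar d(s)=\sum_i(E_{d,i}.s)\,dx_i$, deduce $\calO^{\sm}_{\LT}$-linearity from the derivation property of the Lie-algebroid action, get uniqueness from the density in Corollary~\ref{cor:OsmOfldense}, and extend to $k\ge 1$ by the Leibniz rule as in Corollary~\ref{cor:dkLT}. The only difference is that you make explicit a few points the paper compresses (the verification of (i) via $E_{d,i}.x_j=\delta_{ij}$, the check that the Leibniz formula respects $\pi^{-1}\calO_{\fl}$-balanced tensors), which is a legitimate and welcome expansion rather than a different argument.
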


As $\bar d^k$ is determined by its restriction to $\calO^{\sm}_{\LT}\ox_C\pi^{-1}\Omega^k_{\fl}$, we see 
\begin{corollary}
For $k\ge 0$, $\bar d^{k+1}\comp \bar d^k=0$ on $\calO_{\LT}\ox_{\pi^{-1}\calO_{\fl}}\pi^{-1}\Omega_{\fl}^k$.
\end{corollary}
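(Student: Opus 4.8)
The plan is to reduce the claimed identity to the classical fact $d\comp d=0$ for the de Rham differential on $\fl$, by combining the two characterizing properties of $\bar d^{\bullet}$ in Proposition \ref{prop:dbarLT} with the density statement of Corollary \ref{cor:OsmOfldense}. First I would record that, on sections over any $U\in\ffrb_{\LT}$ (on which $z_d\neq 0$ automatically, as $\pi(U)\subset\Omega^d$), both $\bar d^k$ and $\bar d^{k+1}$ are continuous $C$-linear maps: this is clear from their construction in Proposition \ref{prop:dbarLT}, for instance from the explicit formula $\bar d(s)=\sum_{i=0}^{d-1}(E_{d,i}.s)\,dx_i$ together with its $\calO_{\LT}^{\sm}$-linear extension to higher forms. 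Consequently $\bar d^{k+1}\comp\bar d^k$ is continuous, so it suffices to show it vanishes on a dense subspace of $(\calO_{\LT}\ox_{\pi^{-1}\calO_{\fl}}\pi^{-1}\Omega_{\fl}^k)(U)$ for each such $U$.

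For that dense subspace I would take the $C$-span of elementary tensors $f\ox\omega$ with $f\in\calO_{\LT}^{\sm}(U)$ and $\omega\in\pi^{-1}\Omega_{\fl}^k(U)$. Its density follows from Corollary \ref{cor:OsmOfldense}: a general element of $(\calO_{\LT}\ox_{\pi^{-1}\calO_{\fl}}\pi^{-1}\Omega_{\fl}^k)(U)$ is a finite sum $\sum_j g_j\ox\eta_j$ with $g_j\in\calO_{\LT}(U)$, $\eta_j\in\pi^{-1}\Omega_{\fl}^k(U)$, and each $g_j$ is a limit of elements of $\calO_{\LT}^{\sm}(U)\cdot\pi^{-1}\calO_{\fl}(U)$, so $g_j\ox\eta_j$ is a limit of sums $\sum_l f_{j,l}\ox(a_{j,l}\eta_j)$ of the required form. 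On an elementary tensor $f\ox\omega$, property (ii) of Proposition \ref{prop:dbarLT} gives $\bar d^k(f\ox\omega)=f\cdot\bar d^k(1\ox\omega)$, and property (i) identifies $\bar d^k(1\ox\omega)$ with $1\ox d\omega$ for $d$ the de Rham differential on $\fl$; applying $\bar d^{k+1}$ again and using $\calO_{\LT}^{\sm}$-linearity once more yields $f\ox d(d\omega)=0$. Hence $\bar d^{k+1}\comp\bar d^k$ annihilates the dense subspace and is therefore identically zero.

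I do not anticipate any real obstacle here: the entire content is the continuity of $\bar d^k$ (visible from its definition) and the propagation of the density of Corollary \ref{cor:OsmOfldense} to $\pi^{-1}\Omega_{\fl}^k$-coefficients (immediate by writing a general section as a finite sum of elementary tensors). The only point to handle with a little care is to invoke $\calO_{\LT}^{\sm}$-linearity in the precise shape $\bar d^k\big(f\cdot(1\ox\omega)\big)=f\cdot\bar d^k(1\ox\omega)$, which is exactly property (ii), so that the computation collapses cleanly to $d^2=0$ on $\fl$.
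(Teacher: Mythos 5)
Your argument is correct and is essentially the same as the paper's, which also deduces the vanishing from the fact that $\bar d^k$ is determined by its restriction to $\calO_{\LT}^{\sm}\ox_C\pi^{-1}\Omega_{\fl}^k$ (via density from Corollary \ref{cor:OsmOfldense}) together with $d^2=0$ on $\fl$; you merely spell out the continuity of $\bar d^k$ and the reduction to elementary tensors $f\ox\omega$, which the paper leaves implicit.
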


Since $d$ is $\pi^{-1}\calO_{\fl}$-linear, we may twist $d$ by $\pi^{-1}\Omega_{\fl}^k$. Similarly, as $\bar d$ is $\calO_{\LT}^{\sm}$-linear, we may twist $\bar d$ by $\Omega_{\LT}^{\sm}$. These constructions lead to the following commutative diagram. The commutativity of the diagram follows from the construction of these differential operators.

\begin{small}
\begin{tikzcd}
\calO_{\LT} \arrow[r] \arrow[d] & {\calO_{\LT}\ox_{\calO_{\LT}^{\sm}}\Omega_{\LT}^{1,\sm}} \arrow[r] \arrow[d] & \cdots \arrow[r] & {\calO_{\LT}\ox_{\calO_{\LT}^{\sm}}\Omega_{\LT}^{d,\sm}} \arrow[d] \\
\calO_{\LT}\ox_{\pi^{-1}\calO_{\fl}}\pi^{-1}\Omega_{\fl}^1 \arrow[d] \arrow[r]           & \Omega_{\LT}^{1,\sm}\ox_{\calO_{\LT}^{\sm}}\calO_{\LT}\ox_{\pi^{-1}\calO_{\fl}}\pi^{-1}\Omega_{\fl}^1 \arrow[r] \arrow[d]                                                        & \cdots \arrow[r] & \Omega_{\LT}^{d,\sm}\ox_{\calO_{\LT}^{\sm}}\calO_{\LT}\ox_{\pi^{-1}\calO_{\fl}}\pi^{-1}\Omega_{\fl}^1 \arrow[d]                                                        \\
\vdots \arrow[d]                & \vdots \arrow[d]                                                             &                  & \vdots \arrow[d]                                                   \\
\calO_{\LT}\ox_{\pi^{-1}\calO_{\fl}}\pi^{-1}\Omega_{\fl}^d \arrow[r]                     & \Omega_{\LT}^{1,\sm}\ox_{\calO_{\LT}^{\sm}}\calO_{\LT}\ox_{\pi^{-1}\calO_{\fl}}\pi^{-1}\Omega_{\fl}^d \arrow[r]                                                                  & \cdots \arrow[r] & \Omega_{\LT}^{d,\sm}\ox_{\calO_{\LT}^{\sm}}\calO_{\LT}\ox_{\pi^{-1}\calO_{\fl}}\pi^{-1}\Omega_{\fl}^d                                                                 
\end{tikzcd}
\end{small}
with horizontal arrows given by twists of $d^k$, and verticle arrows given by twists of $\bar d^k$. 

Now we want to calculate the cohomology of these differential operators. Since $\bar d$ on $\calO_{\LT}$ is given by the action of $\bar \frn^0$, we see $\ker \bar d$ is killed by $\frg^0$. Therefore, $\ker \bar d=\calO_{\LT}^{\sm}$. In fact these differential operators $\bar d$ will give a resolution of $\calO_{\LT}^{\sm}$.
\begin{theorem}\label{thm:dbarLTresolution}
The complex 
\begin{align*}
    0\to \calO^{\sm}_{\LT}\to \calO_{\LT}\to \calO_{\LT}\ox_{\pi^{-1}\calO_{\fl}}\pi^{-1}\Omega_{\fl}^1\to \cdots\to  \calO_{\LT}\ox_{\pi^{-1}\calO_{\fl}}\pi^{-1}\Omega_{\fl}^d\to 0
\end{align*}
is exact.
\begin{proof}
By Theorem \ref{thm:structureofOm0}, we see that on a sufficiently small $U\in\ffrb_{\LT}$, $\calO_{\LT}(U)$ is isomorphic to $\calO_{\LT}^{\sm}(U)\hat\ox_C\calC^{L\-\lan}(\bar\frn_U,C)$ as $\bar\frn_U$-modules, where $\bar\frn_U\subset\frg$ is a Lie subalgebra trivializing the vector bundle $\pi^{-1}\bar\frn^0|_U$. Here, the completed tensor product is defined as the completed tensor product of two LB spaces. Also the space of locally $L$-analytic functions on $\bar\frn_U$ is defined as in \cite[Definition 2.1.2]{jacinto2023solidlocallyanalyticrepresentations}. Under this identification, 
\begin{align*}
    0\to \calO^{\sm}_{\LT}(U)\to \calO_{\LT}(U)\to (\calO_{\LT}\ox_{\pi^{-1}\calO_{\fl}}\pi^{-1}\Omega_{\fl}^1)(U)\to \cdots\to  (\calO_{\LT}\ox_{\pi^{-1}\calO_{\fl}}\pi^{-1}\Omega_{\fl}^d)(U)\to 0
\end{align*}
is identified with the complex calculating the $\bar\frn_U$-cohomology of $\calO_{\LT}^{\sm}(U)\hat\ox_C\calC^{L\-\lan}(\bar\frn_U,C)$, see for example \cite[Proposition 3.1]{STduality}. Then by the locally analytic Poincar\'e lemma, we see the complex is exact.
\end{proof}
\end{theorem}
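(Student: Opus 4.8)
The plan is to work locally on a basis element $U \in \ffrb_{\LT}$ with $z_d \ne 0$ and reduce the exactness of the global complex to a statement about Lie algebra cohomology, where it becomes the locally analytic Poincaré lemma. First I would invoke Theorem~\ref{thm:structureofOm0}: it identifies $\calO_{\LT}(U)$ with a space of convergent power series in the variables $x_i - x_{i,n}$ with coefficients in $\calO_{\LT}^{\sm}(U)$ and a decay condition. I want to reinterpret this as a topological isomorphism $\calO_{\LT}(U) \isom \calO_{\LT}^{\sm}(U) \,\hat\ox_C\, \calC^{L\-\lan}(\bar\frn_U, C)$, where $\bar\frn_U \subset \frg$ is the Lie subalgebra spanned by the $E_{d,i}$, $i = 0,\dots,d-1$, which trivializes $\pi^{-1}\bar\frn^0|_U$ (this uses the explicit lift $Z$ from the proof of Theorem~\ref{thm:structureofOm0} showing $Z^{-1}E_{d,i}Z = E_{d,i}$ so that the $E_{d,i}$ genuinely generate the universal opposite nilpotent over $U$). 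The completed tensor product here is that of LB-spaces, and I would cite \cite{jacinto2023solidlocallyanalyticrepresentations} for the definition of $\calC^{L\-\lan}(\bar\frn_U, C)$ and for the fact that the expansion in Theorem~\ref{thm:structureofOm0} exactly exhibits the analytic-function side of this tensor product.

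Next I would match up the differential operators. By construction (Proposition~\ref{prop:dbarLT}), $\bar d$ on $\calO_{\LT}$ is literally the dual of the $\bar\frn^0$-action, i.e. $\bar d(s) = \sum_{i=0}^{d-1} (E_{d,i}.s)\,dx_i$, and the twisted operators $\bar d^k$ on $\calO_{\LT} \ox_{\pi^{-1}\calO_{\fl}} \pi^{-1}\Omega^k_{\fl}$ are $\calO_{\LT}^{\sm}$-linear extensions of the Chevalley–Eilenberg differential for $\bar\frn_U$ acting on $\calO_{\LT}(U)$. Since $\pi^{-1}\Omega^k_{\fl}|_U$ is freely trivialized by wedges of the $dx_i = (E_{d,i})^\vee$, the complex $\calO_{\LT}(U) \to (\calO_{\LT}\ox_{\pi^{-1}\calO_{\fl}}\pi^{-1}\Omega^1_{\fl})(U) \to \cdots$ is exactly the Chevalley–Eilenberg complex $\calO_{\LT}(U) \ox \wedge^\bullet \bar\frn_U^\vee$ computing $H^\bullet(\bar\frn_U, \calO_{\LT}(U))$; I would cite \cite[Proposition 3.1]{STduality} for this identification of the twisted de Rham complex with the Lie algebra cohomology complex. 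Using the tensor decomposition from the previous step and $\calO_{\LT}^{\sm}(U)$-linearity, $H^\bullet(\bar\frn_U, \calO_{\LT}(U)) \isom \calO_{\LT}^{\sm}(U) \,\hat\ox_C\, H^\bullet(\bar\frn_U, \calC^{L\-\lan}(\bar\frn_U, C))$, reducing everything to the $\bar\frn_U$-cohomology of locally analytic functions on the abelian Lie algebra $\bar\frn_U$.

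Finally I would apply the locally analytic Poincaré lemma: since $\bar\frn_U$ is abelian (the $E_{d,i}$ commute) and acts on $\calC^{L\-\lan}(\bar\frn_U, C)$ by translation derivatives, the Chevalley–Eilenberg complex is the algebraic de Rham complex of the "$L$-analytic affine space" $\bar\frn_U$, which is exact in positive degrees with $H^0 = C$. This gives exactness of the local complex in degrees $\ge 1$ and $\ker\bar d|_{\calO_{\LT}(U)} = \calO_{\LT}^{\sm}(U)$ — the latter also following from the observation preceding the theorem that $\ker\bar d$ is killed by $\bar\frn^0$ hence by all of $\frg^0$. Sheafifying over the basis $\ffrb_{\LT}$, and handling general $U$ (where possibly $z_d = 0$ on part of $U$) by covering with $G$-translates of charts on which some coordinate is nonvanishing and using $G$-equivariance of $\bar d$, yields exactness of the global complex. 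The main obstacle I anticipate is the bookkeeping around the completed tensor product: making precise that the decay condition in Theorem~\ref{thm:structureofOm0} is exactly the topology of $\calO_{\LT}^{\sm}(U)\,\hat\ox_C\,\calC^{L\-\lan}(\bar\frn_U,C)$, that taking $\bar\frn_U$-cohomology commutes with $\hat\ox_C \calO_{\LT}^{\sm}(U)$ (flatness/exactness of completed tensor for these LB-spaces), and that the Poincaré lemma's homotopy converges in the relevant topology — essentially the estimate that integrating a power series with the given decay keeps the decay, which is where I'd be careful rather than hand-wavy.
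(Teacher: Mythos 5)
Your proposal matches the paper's proof essentially step for step: reduce to a local basis element $U$, use Theorem~\ref{thm:structureofOm0} to exhibit $\calO_{\LT}(U)$ as $\calO_{\LT}^{\sm}(U)\,\hat\ox_C\,\calC^{L\-\lan}(\bar\frn_U,C)$ with $\bar\frn_U$ the span of the $E_{d,i}$, identify the twisted $\bar d$-complex with the Chevalley--Eilenberg complex for $\bar\frn_U$ via \cite[Proposition 3.1]{STduality}, and conclude by the locally analytic Poincar\'e lemma. The extra detail you supply (abelianness of $\bar\frn_U$, the explicit trivialization via $Z$, and the caveats about topological exactness of $\hat\ox_C$) is consistent with and fills in what the paper leaves implicit.
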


\subsection{The Drinfeld space at infinite level}
Now we turn to the Drinfeld side. Recall $D=D_{L,\frac{1}{d+1}}$ is a division algebra over $L$ of invariant $\frac{1}{d+1}$. Let $H_1$ be a special formal $\calO_{D}$-module over $\bar\bbF_q$ of $\calO_L$-height $(d+1)^2$, which is unique up to quasi-isogeny. Let $\ffrm_{\Dr,0}$ be the functor from $\Nilp_{\calO_{\breve{L}}}$ to the category of sets, which assigns an $\calO_{\breve{L}}$-algebra $R$ to the set of equivalence classes of pairs $(G,\rho)$, where $G$ is a $p$-divisible special formal $\calO_{D}$-module over $R$, and $\rho:H_1\ox_{\bar\bbF_q}R/\varpi\dasharrow G\ox_R R/\varpi$ is an $\calO_{D}$-equivariant quasi-isogeny. Note that the property of being special implies compatible, i.e., the action of $\calO_L\to R$ on the Lie algebra of $G$ coincides with the action of $\calO_L\subset\calO_D$ on the Lie algebra. Drinfeld \cite{drinfeld_coverings_1976} showed the functor $\ffrm_{\Dr,0}$ is represented by a formal scheme over $\Spf \calO_{\breve{L}}$. Let $\calM_{\Dr,0}$ denote the base change of the adic generic fiber from $\breve{L}$ to $C$. Let $(\calG',\rho')$ be the universal $\varpi$-divisible special formal $\calO_{D}$-module on $\calM_{\Dr,0}$. The $\varpi$-adic Tate module defines a $\bbZ_p$-local system equipped with an action of $D$ on the \'etale site of $\calM_{\Dr,0}$. For $n\ge 0$, we get an \'etale Galois covering $\calM_{\Dr,n}$ of $\calM_{\Dr,0}$ with Galois group $(\calO_{D}/\varpi^n\calO_D)^\times=\calO_{D}^\times/(1+\varpi^n\calO_{D})$ by considering $\calO_{D}$-equivariant trivializations of $V_{\Dr}/\varpi^n V_{\Dr}$.  By \cite{scholze_moduli_2013}, there exists a perfectoid space $\calM_{\Dr,\infty}$ over $C$ such that 
\[
    \calM_{\Dr,\infty}\sim\ilim_n\calM_{\Dr,n}
\]
There is a natural continuous right action of $\calO_{D}^\times$ on $\calM_{\Dr,\infty}$, which can be extended naturally to an action of $\check{G}=D^\times$. Also ${G}$ acts on each layer $\calM_{\Dr,n}$, $n\ge 0$. According to the height of the quasi-isogeny in the moduli problem, the space $\calM_{\Dr,\infty}$ decomposes $\calO_D^\times$-equivariantly as a disjoint union $\calM_{\Dr,\infty}\isom \sqcup_{i\in\bbZ}\calM_{\Dr,\infty}^{(i)}$. The Hodge filtration of the Dieudonn\'e module of the universal $p$-divisible group $(\calG',\rho')$ defines the Gross-Hopkins period map \cite{hopkins_equivariant_1994}
\begin{align*}
    \pi_{\Dr,\GM}:\calM_{\Dr,\infty}\to {\fl},
\end{align*}
and the Hodge filtration of the rational Tate module of the universal $p$-divisible group $(\calG',\rho')$ defines the Hodge-Tate period map \cite{scholze_moduli_2013}
\begin{align*}
    \pi_{\Dr,\HT}:\calM_{\Dr,\infty}\to \check{\fl}.
\end{align*}
These maps are equivariant for the $\check{G}\times G$-action.

By \cite[Proposition 7.2.2, Theorem 7.2.3]{scholze_moduli_2013}, we have the following duality theorem.
\begin{theorem}[Scholze-Weinstein]\label{thm:SWduality}
There is a $G\times \check G$-equivariant isomorphism of perfectoid spaces over $C$:
\begin{align*}
    \calM_{\LT,\infty}\isom \calM_{\Dr,\infty}
\end{align*}
such that $\pi_{\LT,\HT}$ is identified with $\pi_{\Dr,\GM}$, and $\pi_{\LT,\GM}$ is identified with $\pi_{\Dr,\HT}$.
\end{theorem}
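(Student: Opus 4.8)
The plan is to identify \emph{both} towers with a single moduli space of minuscule modifications of vector bundles on the relative Fargues-Fontaine curve (equivalently, of $B_{\dR}^+$-lattices), using the Scholze-Weinstein classification of $p$-divisible groups over perfectoid rings, and then to read off the two period maps from the two ``positions'' of the modification. Concretely, for a complete affinoid perfectoid $(R,R^+)$ over $(C,\calO_C)$ one has: $\calM_{\LT,\infty}(R,R^+)$ classifies triples $(G,\rho,\alpha)$ with $G$ a compatible $p$-divisible $\calO_L$-module of dimension $1$ and $\calO_L$-height $d+1$, $\rho$ an $\calO_L$-equivariant quasi-isogeny from $H_0$, and $\alpha\colon\calO_L^{d+1}\isom T_\varpi G$ a trivialization of the $\varpi$-adic Tate module; dually, $\calM_{\Dr,\infty}(R,R^+)$ classifies triples $(G',\rho',\alpha')$ with $G'$ a special formal $\calO_D$-module (of $\calO_L$-height $(d+1)^2$ and dimension $d+1$), $\rho'$ an $\calO_D$-equivariant quasi-isogeny from $H_1$, and $\alpha'\colon\calO_D\isom T_\varpi G'$ an $\calO_D$-linear trivialization of the Tate module.

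The next step is to feed these functors into the (relative, perfectoid-family form of the) classification of $p$-divisible groups, cf.\ \cite{scholze_moduli_2013}: over such a ring a $p$-divisible group is equivalent to its Tate module (an \'etale local system) together with a $B_{\dR}^+$-lattice $W$ satisfying $t\Lambda_0\subset W\subset\Lambda_0$ with $\Lambda_0/W$ finite projective of rank $\dim G$, where $\Lambda_0$ denotes the $B_{\dR}^+$-lattice generated by the Tate module --- equivalently, a minuscule modification, at the untilt point of the curve, between the trivial rank-$(d+1)$ bundle attached to the (trivialized) Tate module and the stable bundle attached to the isocrystal of $H_0$ (which has slope $\frac{1}{d+1}$), the latter being rigidified by $\rho$. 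The essential point is that this modification datum is \emph{symmetric}: it is a pair of rank-$(d+1)$ bundles on the curve --- one of slope $0$ (``\'etale''), one of slope $\frac{1}{d+1}$ (``crystalline'') --- together with a single minuscule modification between them. The Lubin-Tate tower trivializes the \'etale bundle and fixes the crystalline one; the Drinfeld tower does exactly the opposite. This is where the numerology enters: a special formal $\calO_D$-module has isocrystal slope $\frac{d+1}{(d+1)^2}=\frac{1}{d+1}$, and after accounting for the $\calO_D$-action via Morita equivalence (recall $D\ox_L\breve L\isom M_{d+1}(\breve L)$) its modification problem collapses to the \emph{same} minuscule-modification problem as on the Lubin-Tate side, with the roles of the \'etale and crystalline bundles interchanged. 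Carrying this out produces a natural isomorphism of functors $\calM_{\LT,\infty}\isom\calM_{\Dr,\infty}$, which propagates to the perfectoid spaces themselves (whose existence we are already granted).

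It then remains to check the equivariance and the compatibility with the period maps, both of which are formal once this dictionary is in place. The group $G=\GL_{d+1}(L)$ acts on $\calM_{\LT,\infty}$ through automorphisms of the trivialized \'etale bundle (i.e.\ through the level structure $\alpha$) and on $\calM_{\Dr,\infty}$ through automorphisms of the isocrystal (i.e.\ through the quasi-isogeny $\rho'$, via $\End^{\circ}_{\calO_D}(H_1)\isom M_{d+1}(L)$), while $\check G=D^\times$ acts the other way around; since the isomorphism interchanges the two bundles, it intertwines the two $G\times\check G$-actions. The Hodge-Tate period map records the position of the modification inside the fibre of the \'etale bundle at the untilt point --- a point of $\fl$ --- while the Gross-Hopkins period map records its position inside the fibre of the crystalline bundle --- a point of $\check\fl$, the flag variety of $\check\frg\ox_{L,\iota}C$ defined via (\ref{ggcheckisom}). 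Hence $\pi_{\LT,\HT}$ and $\pi_{\Dr,\GM}$ (both valued in $\fl$) get identified, and so do $\pi_{\LT,\GM}$ and $\pi_{\Dr,\HT}$ (both valued in $\check\fl$), simply because interchanging the two bundles interchanges the two positions.

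I expect the main obstacle to be the classification input itself, rather than the bookkeeping around it: one needs the relative (perfectoid-family) form of the Scholze-Weinstein equivalence and, above all, a clean verification that the ``special formal $\calO_D$-module'' condition translates, under this equivalence, into precisely the transpose of the Lubin-Tate minuscule-modification condition, with all of the $\calO_D$-equivariance --- the Morita descent, the matching of slopes and of lattices, the compatibility of the $\calO_D$-structure with the fixed isomorphism (\ref{ggcheckisom}) --- faithfully accounted for. Over $\calO_C$ this is \cite{scholze_moduli_2013}; propagating it over a perfectoid base while carrying along the $\calO_D$-action is the technical heart on which the functorial isomorphism, its globalization, the $G\times\check G$-equivariance, and the swap of period maps all rest.
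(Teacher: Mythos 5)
The paper does not prove this theorem; it states it as a recollection and attributes it, with a precise citation, to \cite[Proposition 7.2.2, Theorem 7.2.3]{scholze_moduli_2013} --- the citation \emph{is} the proof. So there is no in-paper argument for your sketch to match or diverge from. That said, your outline is a faithful summary of the Scholze-Weinstein strategy: classify $p$-divisible groups over a perfectoid base by a Tate module together with a $B_{\dR}^+$-lattice pinched between $t\Lambda_0$ and $\Lambda_0$ (equivalently, a minuscule modification between a slope-$0$ and a slope-$\frac{1}{d+1}$ bundle on the relative Fargues-Fontaine curve), note the evident symmetry of this datum, descend the $\calO_D$-structure by Morita equivalence, and read off the two period maps from the two positions of the modification. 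You correctly track which period map lands in which flag variety --- $\pi_{\LT,\HT}$ and $\pi_{\Dr,\GM}$ both take values in $\fl$, and $\pi_{\LT,\GM}$, $\pi_{\Dr,\HT}$ in $\check\fl$ --- and you rightly flag where the real technical content sits (the relative classification, and the Morita translation of the special-formal-$\calO_D$-module condition). Just be clear in your own write-up that this is a summary of the cited reference rather than an independent argument; in a paper whose original content lies elsewhere, a precise citation is exactly the right treatment for a foundational input of this size.
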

In particular, the completed structure sheaves $\calO_{\calM_{\Dr,\infty}}$, $\calO_{\calM_{\LT,\infty}}$ are $G\times \check G$-equivariantly isomorphic to each other. Similar to the treatment on the Lubin-Tate side, we will use the period map $\pi_{\Dr,\HT}:\calM_{\Dr,\infty}\to \check{\fl}$ to define a subsheaf of $\calO_{\calM_{\Dr,\infty}}$, and study some basic properties. By \cite[Proposition 2.22]{scholze_perfectoid_2013}, there is a basis $\ffrb_{\Dr}$ for the analytic topology on $\calM_{\Dr,\infty}$, such that for any $U\in\ffrb_{\Dr}$:
\begin{enumerate}[(i)]
    \item $U$ is affinoid perfectoid.
    \item $U$ is the preimage of some affinoid subset $U_n\subset \calM_{\Dr,n}$ for some $n$.
    \item For any $m\ge n$, let $U_m$ be the preimage of $U_n$. Then the natural map 
    \begin{align*}
        \dlim_{m\ge n}\calO_{\calM_{\Dr,m}}(U_m)\to \calO_{\calM_{\Dr,\infty}}(U)
    \end{align*}
    has dense image.
\end{enumerate}
Here we note as $U\in\ffrb_{\Dr}$ is the preimage of some affinoid open subset of $\calM_{\Dr,n}$, there exists a sufficiently small open compact subgroup of $\check G$ acting on $U$.

Let $\calO_{\calM_{\Dr,\infty}}$ be the completed structure sheaf of $\calM_{\Dr,\infty}$. Define $\calO^{L\-\lan}_{\calM_{\Dr,\infty}}\subset\calO_{\calM_{\Dr,\infty}}$ to be the subsheaf consisting of locally $L$-analytic sections for the $\check G$-action. More precisely, for $U\in\ffrb_{\Dr}$, there exists an open compact subgroup $\check G_U$ of $\check G$ acting on $U$. Then we put $\calO^{L\-\lan}_{\calM_{\Dr,\infty}}(U):=\calO_{\calM_{\Dr,\infty}}(U)^{(\check G_U,L)\-\lan}$. As taking locally $L$-analytic vectors is a left exact functor, we see $\calO^{L\-\lan}_{\calM_{\Dr,\infty}}\subset\calO_{\calM_{\Dr,\infty}}$ is indeed a subsheaf of $\calO_{\calM_{\Dr,\infty}}$. Similarly we define $\calO_{\calM_{\Dr,\infty}}^{\sm}$ as the subsheaf consisting of $\check G$-smooth sections. By \cite[Lemma 2.3.4]{camargo_geometric_2022}, the image of the natural map $\check \pi^{-1}\calO_{\check \fl}\to \calO_{\calM_{\Dr,\infty}}$ lies in the subsheaf $\calO^{L\-\lan}_{\calM_{\Dr,\infty}}$. Write $\check \pi=\pi_{\Dr,\HT}$. There is a $\check G$-equivariant action of $\check \pi^{-1}\check\frg^0$ on $\calO^{L\-\lan}_{\calM_{\Dr,\infty}}$ given by derivation. By the geometric Sen theory \cite[Theorem 4.3.3]{dospinescu2024jacquetlanglandsfunctorpadiclocally}, we know $\calO^{L\-\lan}_{\calM_{\Dr,\infty}}$ is killed by $\check\pi^{-1}\check\frn^0$. Hence we get an action of $\check\pi^{-1}\check\frm^0$ on $\calO^{L\-\lan}_{\calM_{\LT,\infty}}$. Let $\calO^{L\-\lan,\frm^0=0}_{\calM_{\Dr,\infty}}\subset\calO^{L\-\lan}_{\calM_{\Dr,\infty}}$ be the subsheaf such that the action of $\check\pi^{-1}\check\frm^0$ is trivial. Similar to the Lubin-Tate case, we will show $\calO^{L\-\lan,\check\frm^0=0}_{\calM_{\Dr,\infty}}$ is (roughly speaking) a completed tensor product of $\pi^{-1}\calO_{\check\fl}$ with $\calO_{\calM_{\Dr,\infty}}^{\sm}$ over $C$.

Let $w_i$ for $i=0,1,...,d$ be the coordinates on $\check\fl$. Let $U\in\ffrb_{\Dr}$ such that $w_d\neq 0$ on $U$. For $i=0,1,...,d-1$, put $y_i=\frac{w_{i}}{w_d}\in (\check\pi^{-1}\calO_{\check\fl})(U)$.  As $\calO_{\calM_{\Dr,\infty}}^{\sm}(U)\subset \calO_{\calM_{\Dr,\infty}}(U)$ has dense image, there exists $y_{i,n}\in \calO_{\calM_{\Dr,\infty}}^{\sm}(U)$ for $i=0,1,...,d-1$ such that $||y_i-y_{i,n}||\le p^{-n}$. For $i,j=0,1,...,d$, let $E_{ij}\in\check\frg$ be the matrix such that only the $(i,j)$-entry is $1$ and others are $0$. As $E_{d,i}w_i=w_d$ and $E_{d,i}w_d=0$, a direct computation shows $E_{d,i}.y_i=1$.

\begin{theorem}\label{thm:structureofOm0check}
Let $U\in\ffrb_{\Dr}$ such that $w_d\neq 0$ on $U$. Let $s\in \calO^{L\-\lan,\check\frm^0=0}_{\calM_{\Dr,\infty}}(U)$. Then there exists a sufficiently large integer $n\ge 0$, such that
\begin{align*}
    s=\sum_{i_0=0}^\infty\sum_{i_1=0}^\infty\cdots\sum_{i_{d-1}=0}^\infty c_{i_0...i_{d-1}}(y_{0}-y_{0,n})^{i_0}\cdots(y_{d-1}-y_{d-1,n})^{i_{d-1}}
\end{align*}
where $c_{i_0...i_{d-1}}\in\calO^{\sm}_{\calM_{\Dr,\infty}}(U)$ and $c_{i_0...i_{d-1}}p^{n(i_0+\cdots i_{d-1})}\to 0$. 
\begin{proof}
The proof follows the same approach as Theorem \ref{thm:structureofOm0}.
\end{proof}
\end{theorem}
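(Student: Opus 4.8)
The plan is to mirror the proof of Theorem \ref{thm:structureofOm0} verbatim, translating every piece of data from the Lubin-Tate side to the Drinfeld side via the dictionary: $\calM_{\LT,\infty}\leadsto \calM_{\Dr,\infty}$, the $G$-action $\leadsto$ the $\check G$-action, the period map $\pi=\pi_{\LT,\HT}\leadsto \check\pi=\pi_{\Dr,\HT}$, the flag variety $\fl\leadsto \check\fl$, the coordinates $z_i\leadsto w_i$, the functions $x_i\leadsto y_i$ with smooth approximants $x_{i,n}\leadsto y_{i,n}$, and the matrix units $E_{d,i}\in\frg\leadsto E_{d,i}\in\check\frg$. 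Concretely, I would first fix $U\in\ffrb_{\Dr}$ with $w_d\ne 0$ and, for each $i=0,\dots,d-1$ and each $s\in\calO^{L\-\lan,\check\frm^0=0}_{\calM_{\Dr,\infty}}(U)$, define the operator
\begin{align*}
    D_i(s):=\sum_{l=0}^\infty(-1)^l(y_i-y_{i,n})^l\frac{(E_{d,i})^l.s}{l!},
\end{align*}
which converges in $\calO^{L\-\lan}_{\calM_{\Dr,\infty}}(U)$ because the $\check G$-action makes $s$ locally $L$-analytic and the $E_{d,i}$ are locally nilpotent on finite-level functions. Then I would check, using $E_{d,i}.(y_i-y_{i,n})=1$ and the Leibniz rule, that $E_{d,i}.D_i(s)=0$ and that $s=\sum_{l\ge 0}a_{i,l}(y_i-y_{i,n})^l$ with $a_{i,l}=D_i\bigl((E_{d,i})^l.s/l!\bigr)$ killed by $E_{d,i}$; the convergence estimate $a_{i,l}p^{nl}\to 0$ is forced by $\|y_i-y_{i,n}\|\le p^{-n}$ and boundedness of the action on the relevant analytic vectors.

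Next I would iterate: since $[E_{d,i},E_{d,j}]=0$ for $i,j=0,\dots,d-1$, the operators $D_i$ commute and applying $D_0,\dots,D_{d-1}$ successively produces
\begin{align*}
    s=\sum_{i_0=0}^\infty\cdots\sum_{i_{d-1}=0}^\infty c_{i_0\dots i_{d-1}}(y_0-y_{0,n})^{i_0}\cdots(y_{d-1}-y_{d-1,n})^{i_{d-1}}
\end{align*}
with $E_{d,i}.c_{i_0\dots i_{d-1}}=0$ for all $i$ and the required decay of coefficients. The remaining point is to show $c_{i_0\dots i_{d-1}}\in\calO^{\sm}_{\calM_{\Dr,\infty}}(U)$. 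For this I would argue exactly as before: $\check\frm^0$ kills $\calO_{\check\fl}$, hence kills each $y_i-y_{i,n}$, so comparing coefficients in the expansion (which are uniquely determined by $s$) gives $\check\frm^0.c_{i_0\dots i_{d-1}}=0$; combined with the vanishing of $\check\pi^{-1}\check\frn^0$ on $\calO^{L\-\lan}_{\calM_{\Dr,\infty}}$ from geometric Sen theory, this shows $\check\frp^0$ kills the coefficients, so there is an induced $\bar{\check\frn}^0\isom\check\frg^0/\check\frp^0$-action. On the locus $w_d\ne 0$ the bundle $\bar{\check\frn}^0$ is generated by the images of $E_{d,i}$, $i=0,\dots,d-1$ — here I would record the Drinfeld-side analogue of the explicit lift $Z=\left(\begin{smallmatrix} I_{d}&0\\ (w_0,\dots,w_{d-1})&1\end{smallmatrix}\right)$ under the fixed isomorphism \eqref{ggcheckisom} — and since the $c_{i_0\dots i_{d-1}}$ are killed by all $E_{d,i}$ they are killed by $\bar{\check\frn}^0$. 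Hence $\check\frg^0$ acts trivially on them, which by definition means $c_{i_0\dots i_{d-1}}\in\calO^{\sm}_{\calM_{\Dr,\infty}}(U)$.

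The only genuine subtlety, as opposed to the LT side, is that the smooth sections $\calO^{\sm}_{\calM_{\Dr,\infty}}$ are defined relative to the $\check G$-action while the Lie algebroid $\check\frg^0$ lives on $\check\fl$ and acts through the $\check G$-action via $\check\pi$; one must make sure that "killed by $\check\frg^0$'' really is equivalent to "$\check G$-smooth'', which follows since $\check G$ is connected and $\check\frg^0$ integrates the $\check G$-action on $\calO^{L\-\lan}_{\calM_{\Dr,\infty}}$. The main obstacle — which is really a bookkeeping obstacle rather than a conceptual one — is verifying that the norm estimates underlying convergence of $D_i$ and the decay $c_{i_0\dots i_{d-1}}p^{n(i_0+\cdots+i_{d-1})}\to 0$ go through with the $\check G$-action in place of the $G$-action; but the basis $\ffrb_{\Dr}$ has exactly the density property (iii) needed, and the estimates are formally identical to those in \cite[4.3.5]{pan_locally_2022}. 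I therefore simply invoke the proof of Theorem \ref{thm:structureofOm0} with the substitutions above.
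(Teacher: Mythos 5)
Your proposal is correct and is exactly the argument the paper intends, since the paper's own "proof" is the one-line deferral to Theorem \ref{thm:structureofOm0}; you have simply carried out the translation $\calM_{\LT,\infty}\leadsto\calM_{\Dr,\infty}$, $G\leadsto\check G$, $\pi\leadsto\check\pi$, $\fl\leadsto\check\fl$, $x_i\leadsto y_i$, etc.\ explicitly. The subtleties you flag (that "killed by $\check\frg^0$'' matches "$\check G$-smooth'' because $\check G$ is connected and $\check\frg^0$ integrates the action, and that the convergence estimates rest on property (iii) of $\ffrb_{\Dr}$) are the right ones and pose no obstruction.
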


\begin{corollary}\label{cor:OsmOfldensecheck}
For any $U\in\ffrb_{\Dr}$, the image of $\calO^{\sm}_{\calM_{\Dr,\infty}}(U)\ox_C\check\pi^{-1}\calO_{\check\fl}(U)$ is dense in $\calO^{L\-\lan,\check\frm^0=0}_{\calM_{\Dr,\infty}}(U)$.
\begin{proof}
Using the $\check G$-equivariance we may assume $w_d\neq 0$ on $U$. Then this follows directly from Theorem \ref{thm:structureofOm0check}.
\end{proof}
\end{corollary}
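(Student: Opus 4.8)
The plan is to deduce this formally from Theorem~\ref{thm:structureofOm0check}, which already carries the only substantive content; I expect no genuine obstacle, just a reduction step and a minor bookkeeping point about which topology ``dense'' refers to.

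First I would reduce to the case that $w_d\neq 0$ on $U$. On the Lubin--Tate side this was automatic, since the image of the period map lies in the Drinfeld space and hence $z_d$ is invertible; here $\check\pi=\pi_{\Dr,\HT}$ genuinely meets the locus $w_d=0$, so an argument is needed. The period map $\check\pi$, the sheaves $\calO^{\sm}_{\calM_{\Dr,\infty}}$, $\check\pi^{-1}\calO_{\check\fl}$, $\calO^{L\-\lan,\check\frm^0=0}_{\calM_{\Dr,\infty}}$ and the multiplication map among them are all $\check G$-equivariant, and the $\check G$-translates of the standard chart $\{w_d\neq 0\}\subset\check\fl$ cover $\check\fl$: the $L$-points $D^\times=\check\bbG(L)$ are Zariski dense in the base change $\check\bbG\times_{L}C\cong\GL_{d+1,C}$, so $\bigcap_{g\in\check G}g\cdot\{w_d=0\}$ is a $\PGL_{d+1}(C)$-stable proper closed subset of $\check\fl$, hence empty. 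Since density of a subspace of $\calO^{L\-\lan,\check\frm^0=0}_{\calM_{\Dr,\infty}}(U)$ can be checked on a finite cover of $U$ by rational subsets (all the sheaves involved, and the approximation, being compatible with restriction), it suffices to treat the preimages of the standard affinoid charts of $\check\fl$; on such a piece the proof of Theorem~\ref{thm:structureofOm0check} goes through verbatim with the corresponding coordinate $w_i$ playing the role of $w_d$. This is the content of the phrase ``using the $\check G$-equivariance'' in the text, and it is the one step that is not purely formal.

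Now assume $w_d\neq 0$ on $U$ and let $R\subseteq\calO^{L\-\lan,\check\frm^0=0}_{\calM_{\Dr,\infty}}(U)$ denote the image of $\calO^{\sm}_{\calM_{\Dr,\infty}}(U)\ox_C\check\pi^{-1}\calO_{\check\fl}(U)$. Because $(a\ox b)(a'\ox b')$ maps to $(aa')(bb')$, the image $R$ is a $C$-subalgebra, and it contains both $\calO^{\sm}_{\calM_{\Dr,\infty}}(U)$ and $\check\pi^{-1}\calO_{\check\fl}(U)$; in particular it contains each $y_i=w_i/w_d$, each smooth approximant $y_{i,n}$, each difference $y_i-y_{i,n}$, and each coefficient $c_{i_0\dots i_{d-1}}$ appearing in the expansion of Theorem~\ref{thm:structureofOm0check}. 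Hence every finite partial sum $\sum_{i_0,\dots,i_{d-1}\le N}c_{i_0\dots i_{d-1}}(y_0-y_{0,n})^{i_0}\cdots(y_{d-1}-y_{d-1,n})^{i_{d-1}}$ lies in $R$, and by Theorem~\ref{thm:structureofOm0check} every $s\in\calO^{L\-\lan,\check\frm^0=0}_{\calM_{\Dr,\infty}}(U)$ is the limit of such partial sums for a suitable $n$, the tail bound $c_{i_0\dots i_{d-1}}\,p^{n(i_0+\cdots+i_{d-1})}\to 0$ guaranteeing the relevant convergence. Thus $s\in\overline R$, which is the claim. The only remaining point asking for a little care is the verification that the convergence furnished by Theorem~\ref{thm:structureofOm0check} is convergence in the topology in which density is asserted; this is the content of the Drinfeld analogue of the local model $\calO^{\sm}_{\calM_{\Dr,\infty}}(U)\hat\ox_C\calC^{L\-\lan}$ underlying Theorem~\ref{thm:dbarLTresolution}, though one may equally read ``dense'' in the weak sense that every section is a sequential limit of elements of $R$, which is exactly what Theorem~\ref{thm:structureofOm0check} delivers.
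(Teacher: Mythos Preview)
Your proof is correct and follows the same approach as the paper: reduce to $w_d\neq 0$ via $\check G$-equivariance, then read off density from the series expansion of Theorem~\ref{thm:structureofOm0check}. You have simply spelled out in more detail the reduction step (correctly noting that, unlike the Lubin--Tate side, it is not automatic here) and the passage from the expansion to density; the paper's proof is the two-line summary of exactly this.
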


Recall the $G\times \check G$-equivariant isomorphism induces an isomorphism $\calO_{\calM_{\LT,\infty}}\isom \calO_{\calM_{\Dr,\infty}}$.
\begin{theorem}\label{thm:isom}
We have a $G\times \check G$-equivariant isomorphism $\calO_{\calM_{\LT,\infty}}^{L\-\lan,\frm^0=0}\isom \calO_{\calM_{\Dr,\infty}}^{L\-\lan,\check\frm^0=0}$.
\begin{proof}
By changing the role of Lubin-Tate and Drinfeld we see that it suffices to show $\calO_{\calM_{\LT,\infty}}^{L\-\lan,\frm^0=0}(U)\subset \calO_{\calM_{\Dr,\infty}}^{L\-\lan,\check\frm^0=0}(U)$ for any $U\in\ffrb_{\LT}$. Before we do the checking, we note that there is an action of a sufficiently small open compact subgroup $\check{K}_U$ of $\check{G}$ on $U$, as $U$ is quasi-compact so that it can be covered by finitely many open subsets in $\ffrb_{\Dr}$. As $\check G$ acts trivially on $\fl$, we know $\check{G}$ acts trivially on $x_i$ for $i=0,1,...,d-1$. Besides, the $\check{K}_U$-action on $\calO_{\calM_{\LT,\infty}}^{\sm}(U)$ is locally $L$-analytic. Indeed, we may cover $U$ by finitely many open subsets $U_i$ in $\ffrb_{\Dr}$, and the $\check{K}_U$-action on $\calO_{\calM_{\LT,\infty}}^{\sm}(U_i)$ is locally $L$-analytic by \cite[Lemma 2.3.4]{camargo_geometric_2022}. Therefore, using similar arguments as in \cite[Corollary 5.2.10]{pan_locally_2022-1}, we know $\calO_{\calM_{\LT,\infty}}^{L\-\lan,\frm^0=0}(U)\subset \calO_{\calM_{\Dr,\infty}}^{L\-\lan}(U)$. More precisely, let 
\begin{align*}
    s=\sum_{i_0=0}^\infty\sum_{i_1=0}^\infty\cdots\sum_{i_{d-1}=0}^\infty c_{i_0...i_{d-1}}(x_{0}-x_{0,n})^{i_0}\cdots(x_{d-1}-x_{d-1,n})^{i_{d-1}}
\end{align*}
be a section in $\calO_{\calM_{\LT,\infty}}^{L\-\lan,\frm^0=0}(U)$. After shrinking $\check K_U$ if necessary, we may assume the $\check{K}_U$-analytic norm on $c_{i_0\cdots i_{d-1}}$ and $x_i-x_{i,n}$ for $i=0,1,...,d-1$ is the same as the usual norm of them inside $\calO_{\calM_{\LT,\infty}}(U)$ by \cite[Lemma 2.1.5 (3)]{pan_locally_2022}. Then such a section $s$ should be $\check K_U$-analytic. It remains to show $\calO_{\calM_{\LT,\infty}}^{L\-\lan,\frm^0=0}(U)$ is killed by $\check{\frm}^0$-action. Recall the natural projection maps $\calM_{\LT,n}\to \calM_{\LT,0}$ and $\pi_{\LT,\GM,0}:\calM_{\LT,0}\to \check\fl$ are \'etale maps. Then we see $\check \frm^0$ acts trivially on $\calO_{\calM_{\LT,\infty}}^{\sm}$ since $\check \frm^0$ acts trivially on $\calO_{\check\fl}$. Since $\check G$ acts trivially on $x_i$ for $i=0,1,...,d-1$, we deduce $\check \frm^0$ acts trivially on $x_i$. Then from the local expansion Theorem \ref{thm:structureofOm0} we deduce $\calO_{\calM_{\LT,\infty}}^{L\-\lan,\frm^0=0}(U)\subset \calO_{\calM_{\Dr,\infty}}^{L\-\lan,\check\frm^0=0}(U)$.
\end{proof}
\end{theorem}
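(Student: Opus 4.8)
The plan is to leverage the Scholze--Weinstein isomorphism (Theorem \ref{thm:SWduality}) to regard $\calO^{L\-\lan,\frm^0=0}_{\calM_{\LT,\infty}}$ and $\calO^{L\-\lan,\check\frm^0=0}_{\calM_{\Dr,\infty}}$ as two subsheaves of the single completed structure sheaf $\calO_{\calM_{\LT,\infty}}\isom\calO_{\calM_{\Dr,\infty}}$ and to prove that they agree. The defining data on the Drinfeld side is obtained from that on the Lubin--Tate side by interchanging the two towers: under Theorem \ref{thm:SWduality} one has $\pi_{\LT,\HT}\leftrightarrow\pi_{\Dr,\GM}$ and $\pi_{\LT,\GM}\leftrightarrow\pi_{\Dr,\HT}$, together with $G\leftrightarrow\check G$ and $\fl\leftrightarrow\check\fl$, so the assertion is symmetric in the two towers. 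Since an inclusion of subsheaves of $\calO_{\calM_{\LT,\infty}}$ can be checked on a basis, it suffices to establish one containment, say $\calO^{L\-\lan,\frm^0=0}_{\calM_{\LT,\infty}}(U)\subseteq\calO^{L\-\lan,\check\frm^0=0}_{\calM_{\Dr,\infty}}(U)$ for every $U\in\ffrb_{\LT}$; the reverse containment then follows by the symmetric argument with $\ffrb_{\Dr}$. Using $G$-equivariance I may assume $z_d\ne 0$ on $U$ and write a given $s\in\calO^{L\-\lan,\frm^0=0}_{\calM_{\LT,\infty}}(U)$ in the explicit form provided by Theorem \ref{thm:structureofOm0},
\[
    s=\sum_{i_0,\dots,i_{d-1}\ge0}c_{i_0\cdots i_{d-1}}(x_0-x_{0,n})^{i_0}\cdots(x_{d-1}-x_{d-1,n})^{i_{d-1}},
\]
with $c_{i_0\cdots i_{d-1}}\in\calO^{\sm}_{\calM_{\LT,\infty}}(U)$ (smooth for $G$), $x_i\in\pi_{\LT,\HT}^{-1}\calO_{\fl}(U)$, $x_{i,n}\in\calO^{\sm}_{\calM_{\LT,\infty}}(U)$, and $c_{i_0\cdots i_{d-1}}p^{n(i_0+\cdots+i_{d-1})}\to 0$. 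The heuristic behind the approach is exactly Corollary \ref{cor:OsmOfldense}: the building blocks $\calO^{\sm}$ and functions from the flag variety generate a dense subspace, and each such building block visibly makes sense on both sides.

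The first step is to show that $s$ is locally $L$-analytic for the $\check G$-action. Since $U$ is quasicompact, a sufficiently small open compact $\check K_U\subseteq\check G$ stabilises it. Each factor appearing in the expansion is $\check K_U$-locally analytic: the $x_i$ are pulled back along $\pi_{\LT,\HT}$ from $\fl$, on which $\check G$ acts trivially, so $\check K_U$ fixes the $x_i$; and $x_{i,n},c_{i_0\cdots i_{d-1}}$ lie in $\calO^{\sm}_{\calM_{\LT,\infty}}(U)$, on which, after covering $U$ by finitely many members of $\ffrb_{\Dr}$, the $\check K_U$-action is locally $L$-analytic by \cite[Lemma 2.3.4]{camargo_geometric_2022}. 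The point that actually requires work is to promote ``each summand is locally analytic'' to ``the series is locally analytic'', i.e.\ to obtain a uniform lower bound on the radius of $\check K_U$-analyticity. Following \cite[Corollary 5.2.10]{pan_locally_2022-1}, after shrinking $\check K_U$ I would invoke \cite[Lemma 2.1.5 (3)]{pan_locally_2022} to arrange that the $\check K_U$-analytic norm on the $c_{i_0\cdots i_{d-1}}$ and on the $x_i-x_{i,n}$ coincides with the norm they carry inside $\calO_{\calM_{\LT,\infty}}(U)$; then the decay condition forces the series to converge in the $\check K_U$-analytic norm, so $s\in\calO^{L\-\lan}_{\calM_{\Dr,\infty}}(U)$. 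I expect this to be the main obstacle.

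It remains to check that $s$ is annihilated by the horizontal action of $\check\pi^{-1}\check\frm^0$. Since $\calO^{L\-\lan}_{\calM_{\Dr,\infty}}$ is killed by $\check\pi^{-1}\check\frn^0$ (geometric Sen theory, \cite[Theorem 4.3.3]{dospinescu2024jacquetlanglandsfunctorpadiclocally}), and $\check\frm^0$ acts by derivations that are $\check\pi^{-1}\calO_{\check\fl}$-linear, it is enough to see that each factor of the expansion is killed by $\check\frm^0$ and then differentiate through the series. The $x_i$ are $\check G$-invariant, hence killed by the infinitesimal $\check G$-action and a fortiori by $\check\pi^{-1}\check\frm^0$. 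For the $c_{i_0\cdots i_{d-1}}$: being $G$-smooth they descend to some finite level $\calM_{\LT,m}$, and the composite $\calM_{\LT,m}\to\calM_{\LT,0}\xrightarrow{\pi_{\LT,\GM,0}}\check\fl$ is étale (\cite[Proposition 7.2]{de_jong_etale_1995}); since the Lie algebroid $\check\frg^0$ acts on $\calO_{\check\fl}$ with $\check\frp^0$ in the kernel, and an étale morphism is crystalline for this action, $\check\frm^0$ kills the $c_{i_0\cdots i_{d-1}}$ as well. Feeding these facts into the expansion of Theorem \ref{thm:structureofOm0} gives $\check\frm^0\cdot s=0$, so $s\in\calO^{L\-\lan,\check\frm^0=0}_{\calM_{\Dr,\infty}}(U)$; the resulting isomorphism is automatically $G\times\check G$-equivariant, being a restriction of the equivariant isomorphism of Theorem \ref{thm:SWduality}. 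Apart from the analyticity bound above, the only other point needing care is the compatibility of the geometric Sen action with the étale maps $\calM_{\LT,m}\to\check\fl$, which I would take from \cite{camargo_geometric_2022}.
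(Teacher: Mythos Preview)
Your proposal is correct and follows essentially the same route as the paper's proof: reduce by symmetry to one inclusion, use the expansion of Theorem \ref{thm:structureofOm0}, verify $\check K_U$-local analyticity of each factor via \cite[Lemma 2.3.4]{camargo_geometric_2022}, promote to analyticity of the series by shrinking $\check K_U$ and invoking \cite[Lemma 2.1.5 (3)]{pan_locally_2022} as in \cite[Corollary 5.2.10]{pan_locally_2022-1}, and then check $\check\frm^0$ kills both the $x_i$ (since $\check G$ acts trivially on $\fl$) and the smooth coefficients (since the Gross--Hopkins map $\calM_{\LT,n}\to\check\fl$ is \'etale). The paper states the \'etaleness argument slightly more directly than your ``crystalline for this action'' phrasing, but the content is identical.
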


\begin{corollary}\label{cor:OOcheckdense}
Let $U\in\ffrb_{\LT}$ be an open subset. Then the image of $\calO_{\calM_{\LT,\infty}}^{\sm}(U)\ox_C\calO_{\calM_{\Dr,\infty}}^{\sm}(U)$ in $\calO_{\calM_{\LT,\infty}}^{L\-\lan,\frm^0=0}(U)$ is dense.
\begin{proof}
By Theorem \ref{thm:isom}, we know the image of $\calO_{\calM_{\LT,\infty}}^{\sm}(U)\ox_C\calO_{\calM_{\Dr,\infty}}^{\sm}(U)$ lies in $\calO_{\calM_{\LT,\infty}}^{L\-\lan,\frm^0=0}(U)$. Moreover, $\pi^{-1}\calO_{\fl}(U)\to \calO_{\calM_{\Dr,\infty}}^{\sm}(U)$ is injective as $\calM_{\Dr,0}$ is $\bbZ$-copy of the Drinfeld space $\Omega^d\subset \fl$. Then we can deduce the density result from Corollary \ref{cor:OsmOfldense}.
\end{proof}
\end{corollary}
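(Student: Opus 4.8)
The plan is to deduce the corollary from Corollary~\ref{cor:OsmOfldense} and Theorem~\ref{thm:isom} by factoring the natural map of Corollary~\ref{cor:OsmOfldense} through the map in question. To begin, one checks that the latter map is well defined: the factor $\calO_{\calM_{\LT,\infty}}^{\sm}(U)$ already sits inside $\calO_{\LT}(U)=\calO_{\calM_{\LT,\infty}}^{L\-\lan,\frm^0=0}(U)$ (this inclusion was used in Corollary~\ref{cor:OsmOfldense}), while the factor $\calO_{\calM_{\Dr,\infty}}^{\sm}(U)$ sits inside $\calO_{\calM_{\Dr,\infty}}^{L\-\lan,\check\frm^0=0}(U)$, which by Theorem~\ref{thm:isom} is the same subspace of $\calO_{\calM_{\LT,\infty}}(U)\isom\calO_{\calM_{\Dr,\infty}}(U)$ as $\calO_{\LT}(U)$. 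Since the $\frm^0$-action on $\calO_{\LT}$ is by derivations, $\calO_{\LT}$ is a subsheaf of rings, so the product of these two subspaces again lies in $\calO_{\LT}(U)$.

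The geometric input I would isolate is that $\pi^{-1}\calO_{\fl}$ consists of $\check G$-smooth sections once transported to the Drinfeld side. Under the Scholze--Weinstein isomorphism (Theorem~\ref{thm:SWduality}), $\pi=\pi_{\LT,\HT}$ is identified with the Gross--Hopkins period map $\pi_{\Dr,\GM}$, which factors through the finite level $\calM_{\Dr,0}$; moreover $\pi_{\Dr,\GM,0}\colon\calM_{\Dr,0}\to\fl$ exhibits $\calM_{\Dr,0}$ as a $\bbZ$-indexed disjoint union of copies of the Drinfeld space $\Omega^d\subset\fl$. As $\calO_D^\times$ is an open compact subgroup of $\check G$ acting trivially on $\calM_{\Dr,0}$, every section pulled back from $\calM_{\Dr,0}$ is $\check G$-smooth; hence the image of $\pi^{-1}\calO_{\fl}$ in $\calO_{\calM_{\Dr,\infty}}$ lies in $\calO_{\calM_{\Dr,\infty}}^{\sm}$, and since $\Omega^d$ is open in $\fl$ this gives an injection $\pi^{-1}\calO_{\fl}(U)\inj\calO_{\calM_{\Dr,\infty}}^{\sm}(U)$.

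With this, the composite
\begin{align*}
\calO_{\calM_{\LT,\infty}}^{\sm}(U)\ox_C\pi^{-1}\calO_{\fl}(U)\longrightarrow\calO_{\calM_{\LT,\infty}}^{\sm}(U)\ox_C\calO_{\calM_{\Dr,\infty}}^{\sm}(U)\longrightarrow\calO_{\calM_{\LT,\infty}}^{L\-\lan,\frm^0=0}(U)
\end{align*}
coincides with the natural map of Corollary~\ref{cor:OsmOfldense}, whose image is dense; therefore the second arrow already has dense image, which is precisely the assertion. No $G$-equivariance reduction is needed here, since $\pi$ factors through $\Omega^d$ and hence every homogeneous coordinate $z_i$ is invertible on $\calM_{\LT,\infty}$, so Corollary~\ref{cor:OsmOfldense} applies to every $U\in\ffrb_{\LT}$. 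I expect the only genuinely delicate point to be the content of the second paragraph --- correctly invoking Scholze--Weinstein to interchange the Hodge--Tate and Gross--Hopkins maps and using that the Gross--Hopkins map is already trivialized at the finite level $\calM_{\Dr,0}$ --- the rest being formal bookkeeping with the density and comparison statements established above.
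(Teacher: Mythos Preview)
Your proposal is correct and follows essentially the same route as the paper: use Theorem~\ref{thm:isom} to see the map lands in $\calO_{\LT}(U)$, use the Scholze--Weinstein identification of $\pi_{\LT,\HT}$ with $\pi_{\Dr,\GM}$ (which factors through $\calM_{\Dr,0}\isom\bbZ\times\Omega^d$) to get $\pi^{-1}\calO_{\fl}(U)\inj\calO_{\calM_{\Dr,\infty}}^{\sm}(U)$, and then factor the dense map of Corollary~\ref{cor:OsmOfldense} through the map in question. Your write-up is in fact more explicit than the paper's on why the image lands in the right place (the ring structure of $\calO_{\LT}$) and on why sections pulled back from $\fl$ are $\check G$-smooth.
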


For simplicity, we denote $\calO_{\Dr}$ by the sheaf $\calO^{L\-\lan,\check\frm^0=0}_{\calM_{\Dr,\infty}}$ on $\calM_{\Dr,\infty}$. Put $\calO_{\Dr}^{\sm}:=\calO^{\check G\-\sm}_{\calM_{\Dr,\infty}}$. Let $\Omega_{\calM_{\Dr,n}}^k$ be the sheaf of $k$-differential forms on $\calM_{\Dr,n}$ for $k=0,1,...,d$ and put $\Omega_{\Dr}^{k,\sm}:=\dlim_n\pi_n^{-1}\Omega_{\calM_{\Dr,n}}^k$ with $\pi_n:\calM_{\Dr,\infty}\to \calM_{\Dr,n}$. Clearly $\calO_{\Dr}^{\sm}=\Omega_{\Dr}^{0,\sm}$ and $\Omega_{\Dr}^{k,\sm}=\wedge^k_{\calO_{\Dr}^{\sm}}\Omega_{\Dr}^{1,\sm}$. Using similar methods in Corollary \ref{cor:dkLT}, Proposition \ref{prop:dbarLT}, we deduce the following two propositions.

\begin{proposition}
For $k\ge 0$, there exists a differential operator $\check d^k:\calO_{\Dr}\ox_{\calO_{\Dr}^{\sm}}\Omega_{\Dr}^{k,\sm}\to \calO_{\Dr}\ox_{\calO_{\Dr}^{\sm}}\Omega_{\Dr}^{k+1,\sm}$, such that
\begin{enumerate}[(i)]
    \item the restriction of $\check d^k$ on $\Omega_{\Dr}^{k,\sm}$ is given by the usual derivation $\Omega^{k,\sm}_{\Dr}\to \Omega_{\Dr}^{k+1,\sm}$, 
    \item $\check d^k$ is $\pi^{-1}\calO_{\check \fl}$-linear.
\end{enumerate}
Moreover, $\check d^k$ is uniquely determined by these two properties up to constants, and for $k\ge 0$, $\check d^{k+1}\comp \check d^k=0$.
\end{proposition}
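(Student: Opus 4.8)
The plan is to transport the construction of Proposition~\ref{prop:dLT} and Corollary~\ref{cor:dkLT} to the Drinfeld tower essentially verbatim, with Theorem~\ref{thm:structureofOm0check} and Corollary~\ref{cor:OsmOfldensecheck} playing the roles of Theorem~\ref{thm:structureofOm0} and Corollary~\ref{cor:OsmOfldense}. First I would build $\check d=\check d^{0}\colon\calO_{\Dr}\to\calO_{\Dr}\ox_{\calO_{\Dr}^{\sm}}\Omega_{\Dr}^{1,\sm}$: I declare it to be the finite-level derivation on $\calO_{\Dr}^{\sm}=\Omega_{\Dr}^{0,\sm}$ and the zero map on $\check\pi^{-1}\calO_{\check\fl}$, and then, for a chart $U\in\ffrb_{\Dr}$ on which $w_d\neq0$ and a section $s\in\calO_{\Dr}(U)$, I expand $s=\sum_{i_0,\dots,i_{d-1}}c_{i_0\cdots i_{d-1}}(y_0-y_{0,n})^{i_0}\cdots(y_{d-1}-y_{d-1,n})^{i_{d-1}}$ as in Theorem~\ref{thm:structureofOm0check} and set $\check d(s)$ by the Leibniz rule, using $\check d(y_j-y_{j,n})=-dy_{j,n}$. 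Convergence of the resulting series follows from continuity of the finite-level derivation $\calO_{\Dr}^{\sm}(U)\to\Omega_{\Dr}^{1,\sm}(U)$ together with the decay $c_{i_0\cdots i_{d-1}}p^{n(i_0+\cdots+i_{d-1})}\to0$, and Corollary~\ref{cor:OsmOfldensecheck} then shows that $\check d(s)$ is independent of $n$ and of the approximations $y_{j,n}$, and that $\check d$ commutes with restriction to rational subsets.

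Having defined $\check d$ on all charts with $w_d\neq0$, I would pass to an arbitrary $U\in\ffrb_{\Dr}$ by $\check G$-equivariance and glue, obtaining $\check d$ on $\calO_{\Dr}$ with properties (i), (ii); uniqueness up to a constant is then forced by (i), (ii) and the density of $\calO_{\Dr}^{\sm}(U)\ox_C\check\pi^{-1}\calO_{\check\fl}(U)$ in $\calO_{\Dr}(U)$. For $k\ge1$ I twist as in Corollary~\ref{cor:dkLT}: for $s=\sum_i s_i\ox\omega_i$ with $s_i\in\calO_{\Dr}(U)$ and $\omega_i\in\Omega_{\Dr}^{k,\sm}(U)$, put $\check d^{k}(s)=\sum_i\check d(s_i)\wedge\omega_i+\sum_i s_i\,d\omega_i$ with $d\omega_i$ the finite-level differential, and check that this is well defined, $\check\pi^{-1}\calO_{\check\fl}$-linear, restricts to the usual derivation on $\Omega_{\Dr}^{k,\sm}$, and is unique up to constants with these two properties. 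Finally $\check d^{k+1}\comp\check d^{k}=0$: the composite is $\check\pi^{-1}\calO_{\check\fl}$-linear, and on the image of $\Omega_{\Dr}^{k,\sm}\ox_C\check\pi^{-1}\calO_{\check\fl}$ it reduces to $d^{k+1}\comp d^{k}=0$ at finite level, so it vanishes on a dense subspace by Corollary~\ref{cor:OsmOfldensecheck} and hence everywhere.

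The only step that is not a mechanical copy of the Lubin-Tate argument---and the one I expect to need a little care---is the reduction to charts with $w_d\neq0$. On the Lubin-Tate side this is vacuous, since the image of $\pi_{\LT,\HT}$ lies in the Drinfeld space $\Omega^d\subset\fl$ and so $z_d$ is nowhere zero on $\calM_{\LT,\infty}$; on the Drinfeld side $\check\pi=\pi_{\Dr,\HT}$ is identified under Theorem~\ref{thm:SWduality} with the Gross-Hopkins map $\pi_{\LT,\GM}$, whose image is all of $\check\fl$, so the hyperplane $\{w_d=0\}$ meets the image and the $\check G$-equivariance is essential. To justify it, I would use that $\check\bbG$ is connected reductive over the infinite field $L$, so $\check G=\check\bbG(L)$ is Zariski dense in $\check\bbG_C\cong\GL_{d+1,C}$; a $\check G$-stable Zariski-closed subset of $\check\fl$ contained in the hyperplane $\{w_d=0\}$ would therefore be $\GL_{d+1,C}$-stable, hence empty, so the $\check G$-translates of $\{w_d\neq0\}$ cover $\check\fl$. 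Pulling back along the $\check G$-equivariant map $\check\pi$, the $\check G$-translates of the $U\in\ffrb_{\Dr}$ with $w_d\neq0$ form a cover of $\calM_{\Dr,\infty}$, which is exactly what the gluing in the first step requires.
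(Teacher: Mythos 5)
Your proposal matches the paper's (implicit) proof, which simply invokes the methods of Proposition~\ref{prop:dLT} and Corollary~\ref{cor:dkLT} transported verbatim to the Drinfeld side. Your added justification that the $\check G$-translates of $\{w_d\neq0\}$ cover $\check\fl$---via Zariski density of $\check\bbG(L)$ in $\check\bbG_C\cong\GL_{d+1,C}$ and transitivity on $\check\fl_C$---is correct and fills in a point that the paper uses tacitly in Corollary~\ref{cor:OsmOfldensecheck} without comment.
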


\begin{proposition}\label{prop:dDrbar}
For $k\ge 0$, there exists a differential operator $\check {\bar  d}^k:\calO_{\Dr}\ox_{\check\pi^{-1}\calO_{\check\fl}}\check\pi^{-1}\Omega_{\check\fl}^k\to \calO_{\Dr}\ox_{\check\pi^{-1}\calO_{\check\fl}}\check\pi^{-1}\Omega_{\check\fl}^{k+1}$, such that
\begin{enumerate}[(i)]
    \item The restriction of $\check {\bar  d}$ on $\check\pi^{-1}\Omega^k_{\check\fl}$ is induced by the usual derivation $\Omega^k_{\check\fl}\to \Omega_{\check\fl}^{k+1}$,
    \item $\check {\bar  d}^k$ is $\calO^{\sm}_{\Dr}$-linear.
\end{enumerate}
Moreover, $\check {\bar  d}^k$ is uniquely determined by these two properties up to constants, and for $k\ge 0$, we have $\check {\bar  d}^{k+1}\comp \check {\bar  d}^k=0$.
\end{proposition}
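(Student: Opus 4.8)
The proof is a line-by-line translation of Proposition \ref{prop:dbarLT}, with the Lubin--Tate data replaced by the Drinfeld data; I only indicate the few places where one must check that the structural inputs transport. Consider first the case $k=0$. Recall $\calO_{\Dr}=\calO^{L\-\lan,\check\frm^0=0}_{\calM_{\Dr,\infty}}$. By geometric Sen theory the sheaf $\calO^{L\-\lan}_{\calM_{\Dr,\infty}}$ is killed by $\check\pi^{-1}\check\frn^0$, and by construction $\calO_{\Dr}$ is killed by $\check\pi^{-1}\check\frm^0=\check\pi^{-1}(\check\frp^0/\check\frn^0)$; hence $\check\pi^{-1}\check\frp^0$ acts trivially on $\calO_{\Dr}$, and the $\check\pi^{-1}\check\frg^0$-action descends to an action of the quotient Lie algebroid $\check\frg^0/\check\frp^0$ on $\calO_{\Dr}$. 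Dualizing, via the identification $(\check\frg^0/\check\frp^0)^\vee\isom\Omega^1_{\check\fl}$ (valid as on the flag variety, since over $C$ the Brauer--Severi variety $\check\fl$ is $\bbP^d$ with the parabolic structure transported along (\ref{ggcheckisom})), one obtains
\[
  \check{\bar d}:\calO_{\Dr}\to \calO_{\Dr}\ox_{\check\pi^{-1}\calO_{\check\fl}}\check\pi^{-1}\Omega^1_{\check\fl},
\]
which on a chart $U\in\ffrb_{\Dr}$ with $w_d\neq 0$ reads $\check{\bar d}(s)=\sum_{i=0}^{d-1}(E_{d,i}.s)\,dy_i$, since the matrices $E_{d,i}$ for $i=0,\dots,d-1$ furnish a locally constant frame of $\check\pi^{-1}(\check\frg^0/\check\frp^0)$ over $U$, exactly as in the proof of Theorem \ref{thm:structureofOm0}. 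Property (i) is immediate, and property (ii) holds because $\calO^{\sm}_{\Dr}=\calO^{\check G\-\sm}_{\calM_{\Dr,\infty}}$ is killed by all of $\check\pi^{-1}\check\frg^0$ (the $\check\frg$-action on $\check G$-smooth vectors is trivial, and the $\check\frg^0$-action is $\calO_{\check\fl}$-linear in the $\check\frg$-slot). Uniqueness up to a constant follows from Corollary \ref{cor:OsmOfldensecheck}: any operator satisfying (i) and (ii) is determined on the dense subspace $\calO^{\sm}_{\Dr}\ox_C\check\pi^{-1}\Omega^\bullet_{\check\fl}$.

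For general $k\ge 0$ one twists $\check{\bar d}$ by $\check\pi^{-1}\Omega^k_{\check\fl}$ as in the passage from Proposition \ref{prop:dLT} to Corollary \ref{cor:dkLT}: on $U\in\ffrb_{\Dr}$ with $w_d\neq 0$, present a section of $\calO_{\Dr}\ox_{\check\pi^{-1}\calO_{\check\fl}}\check\pi^{-1}\Omega^k_{\check\fl}$ as $\sum_j s_j\ox\eta_j$ with $s_j\in\calO_{\Dr}(U)$ and $\eta_j\in\check\pi^{-1}\Omega^k_{\check\fl}(U)$, and set $\check{\bar d}^k(\sum_j s_j\ox\eta_j)=\sum_j \check{\bar d}(s_j)\wedge\eta_j+s_j\,d\eta_j$, where $d$ denotes the de Rham differential of $\check\fl$. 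Using $\calO^{\sm}_{\Dr}$-linearity of $\check{\bar d}$ and the Leibniz rule on $\Omega^\bullet_{\check\fl}$ one checks this is independent of the presentation, compatible with restriction to rational subsets, and, by $\check G$-equivariance, extends to all $U\in\ffrb_{\Dr}$; (i) and (ii) hold by construction, and uniqueness again follows from Corollary \ref{cor:OsmOfldensecheck}. Finally $\check{\bar d}^{k+1}\comp\check{\bar d}^k=0$, since the composite is determined by its restriction to $\calO^{\sm}_{\Dr}\ox_C\check\pi^{-1}\Omega^\bullet_{\check\fl}$, where it reduces to $d^{k+1}\comp d^k=0$ for the de Rham complex of $\check\fl$; equivalently, on a chart the local formula identifies the complex with the Chevalley--Eilenberg complex of the abelian Lie algebra $\langle E_{d,0},\dots,E_{d,d-1}\rangle$ acting on $\calO_{\Dr}(U)$, which is manifestly square-zero.

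There is no essential new difficulty: the only point requiring care is that the three structural inputs of Proposition \ref{prop:dbarLT} — vanishing of $\check\pi^{-1}\check\frn^0$ on $\calO^{L\-\lan}_{\calM_{\Dr,\infty}}$, local triviality of $\check\frg^0/\check\frp^0$ on $\check\fl$, and the density statement — all hold verbatim on the Drinfeld side, respectively by geometric Sen theory, by the description of $\check\fl$ over $C$ as $\bbP^d$ with parabolic structure transported along (\ref{ggcheckisom}), and by Corollary \ref{cor:OsmOfldensecheck}. Granting these, every step above is forced.
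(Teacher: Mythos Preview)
Your proposal is correct and follows exactly the approach the paper indicates: the paper itself gives no separate proof but simply states that the result follows by the same methods as Proposition \ref{prop:dbarLT} and Corollary \ref{cor:dkLT}, and your argument is precisely the verbatim transport of those proofs to the Drinfeld side with the structural inputs (geometric Sen theory, the description of $\check\fl$ over $C$, and Corollary \ref{cor:OsmOfldensecheck}) checked explicitly.
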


Finally, similar to Theorem \ref{thm:dbarLTresolution}, we also obtain the following result.
\begin{theorem}\label{thm:dbarDrresolution}
The complex 
\begin{align*}
    0\to \calO^{\sm}_{\Dr}\to \calO_{\Dr}\to \calO_{\Dr}\ox_{\check\pi^{-1}\calO_{\check\fl}}\check\pi^{-1}\check\Omega_{\fl}^1\to \cdots\to  \calO_{\Dr}\ox_{\check\pi^{-1}\calO_{\check\fl}}\check\pi^{-1}\Omega_{\check\fl}^d\to 0
\end{align*}
is exact.
\end{theorem}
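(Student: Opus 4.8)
The plan is to mirror the proof of Theorem \ref{thm:dbarLTresolution} verbatim, transporting every ingredient to the Drinfeld side via the structure theorem already established there. First I would recall that Theorem \ref{thm:structureofOm0check} gives, for a sufficiently small $U\in\ffrb_{\Dr}$ with $w_d\neq 0$ on $U$, an identification $\calO_{\Dr}(U)\isom \calO_{\Dr}^{\sm}(U)\hat\ox_C\calC^{L\-\lan}(\check{\bar\frn}_U,C)$ as $\check{\bar\frn}_U$-modules, where $\check{\bar\frn}_U\subset\check\frg$ is a Lie subalgebra trivializing $\check\pi^{-1}\check{\bar\frn}^0|_U$ (on the locus $w_d\neq 0$ this role is played by the span of the $E_{d,i}$, $i=0,\dots,d-1$, conjugated by the lift of the point, exactly as in the Lubin-Tate case). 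Here the completed tensor product is the completed tensor product of LB spaces and $\calC^{L\-\lan}(\check{\bar\frn}_U,C)$ is locally $L$-analytic functions in the sense of \cite[Definition 2.1.2]{jacinto2023solidlocallyanalyticrepresentations}. One needs the analogue of the fact, used implicitly on the Lubin-Tate side, that $\check{\bar d}$ on $\calO_{\Dr}$ is given by the action of $\check{\bar\frn}^0$ (dualizing, since $\check{\frp}^0$ kills $\calO_{\Dr}$), so $\ker\check{\bar d}=\calO_{\Dr}^{\sm}$ and the twisted operators $\check{\bar d}^k$ assemble the Chevalley-Eilenberg complex computing $\check{\bar\frn}_U$-cohomology.

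The key steps, in order: (1) cover $\calM_{\Dr,\infty}$ by the basis $\ffrb_{\Dr}$ and, using $\check G$-equivariance of $\check\pi$ and of all the operators, reduce exactness of the complex of sheaves to exactness of the complex of sections over a sufficiently small $U\in\ffrb_{\Dr}$ with $w_d\neq 0$ on $U$; (2) invoke Theorem \ref{thm:structureofOm0check} to identify $\calO_{\Dr}(U)$ with $\calO_{\Dr}^{\sm}(U)\hat\ox_C\calC^{L\-\lan}(\check{\bar\frn}_U,C)$ compatibly with the $\check{\bar\frn}_U$-action, and identify the twisted operators $\check{\bar d}^k$ with the differentials in the complex computing Lie algebra cohomology $H^\bullet(\check{\bar\frn}_U,-)$ of this module, as in \cite[Proposition 3.1]{STduality}; (3) apply the locally $L$-analytic Poincar\'e lemma: the Chevalley-Eilenberg complex of $\calC^{L\-\lan}(\check{\bar\frn}_U,C)$ (with trivial module coefficients, or rather with coefficients twisted by the flat module $\calO_{\Dr}^{\sm}(U)$, which commutes with taking $\check{\bar\frn}_U$-cohomology since the action factors through the $\calC^{L\-\lan}$ factor) is a resolution of the constants $C$, hence after $\hat\ox_C\calO_{\Dr}^{\sm}(U)$ is a resolution of $\calO_{\Dr}^{\sm}(U)$; (4) conclude that $0\to\calO_{\Dr}^{\sm}(U)\to\calO_{\Dr}(U)\to\cdots\to(\calO_{\Dr}\ox_{\check\pi^{-1}\calO_{\check\fl}}\check\pi^{-1}\Omega_{\check\fl}^d)(U)\to 0$ is exact, and sheafify.

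I expect the main obstacle to be the same delicate point as in the Lubin-Tate case, namely making precise the identification in step (2): one must check that the Leibniz-rule definition of $\check{\bar d}^k$ on $\calO_{\Dr}\ox_{\check\pi^{-1}\calO_{\check\fl}}\check\pi^{-1}\Omega_{\check\fl}^k$ agrees, under the isomorphism from Theorem \ref{thm:structureofOm0check}, with the Chevalley-Eilenberg differential for $\check{\bar\frn}_U$, and that the trivialization of $\check\pi^{-1}\check{\bar\frn}^0|_U$ is compatible with the coordinates $y_i=w_i/w_d$ used to write the expansion (this is where one uses that $\check{\bar\frn}^0$ is generated on the $w_d\neq 0$ chart by the conjugates of $E_{d,i}$, exactly as spelled out in the proof of Theorem \ref{thm:structureofOm0}). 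Since the Drinfeld side has a genuinely different period map $\check\pi=\pi_{\Dr,\HT}$ valued in $\check\fl$ and a different group acting, one should double-check that geometric Sen theory applies verbatim to give $\check\frp^0\cdot\calO_{\Dr}=0$ — but this is already recorded in the discussion preceding Theorem \ref{thm:structureofOm0check}. Everything else is a routine transcription, so I would simply write ``the proof is identical to that of Theorem \ref{thm:dbarLTresolution}, replacing $\fl$, $\pi$, $\frn^0$, $\bar\frn^0$, $x_i$ by $\check\fl$, $\check\pi$, $\check\frn^0$, $\check{\bar\frn}^0$, $y_i$ respectively'' and point to the Poincar\'e lemma.
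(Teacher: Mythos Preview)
Your proposal is correct and matches the paper's approach exactly: the paper itself offers no separate argument for Theorem \ref{thm:dbarDrresolution} beyond the remark ``similar to Theorem \ref{thm:dbarLTresolution}'', and your outline is precisely the Drinfeld-side transcription of that proof via Theorem \ref{thm:structureofOm0check} and the locally analytic Poincar\'e lemma. The compatibility checks you flag (trivialization of $\check{\bar\frn}^0$ on the chart $w_d\neq 0$, identification of $\check{\bar d}^k$ with the Chevalley--Eilenberg differential) are indeed the only points requiring care, and they go through verbatim for the reasons you indicate.
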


The next theorem shows that swapping the roles of Lubin-Tate and Drinfeld leads to an identification of the corresponding differential operators. For $0\le k,l\le d$ integers, we have defined two differential operators
\begin{align*}
    d: \Omega_{\LT}^{k,\sm}\ox_{\calO_{\LT}^{\sm}}\calO_{\LT}\ox_{\pi^{-1}\calO_{\fl}}\pi^{-1}\Omega_{\fl}^{l}\to \Omega_{\LT}^{k+1,\sm}\ox_{\calO_{\LT}^{\sm}}\calO_{\LT}\ox_{\pi^{-1}\calO_{\fl}}\pi^{-1}\Omega_{\fl}^{l}
\end{align*}
and 
\begin{align*}
    \check{\bar d}: \check{\pi}^{-1}\Omega_{\check\fl}^{k}\ox_{\check{\pi}^{-1}\calO_{\check\fl}}\calO_{\Dr}\ox_{\calO_{\Dr}^{\sm}}\Omega_{\Dr}^{l,\sm}\to \check{\pi}^{-1}\Omega_{\check\fl}^{k+1}\ox_{\check{\pi}^{-1}\calO_{\check\fl}}\calO_{\Dr}\ox_{\calO_{\Dr}^{\sm}}\Omega_{\Dr}^{l,\sm}.
\end{align*}
Since $\pi_{\Dr,\GM,n}:\calM_{\Dr,n}\to \fl$ is an \'etale morphism, we see the natural map $\pi_{\Dr,\GM,n}^*\Omega^1_{\fl}\to \Omega^1_{\calM_{\Dr,n}}$ is an isomorphism. This shows 
\begin{align*}
    \Omega_{\LT}^{k,\sm}\ox_{\calO_{\LT}^{\sm}}\calO_{\LT}\ox_{\calO_{\Dr}^{\sm}}\Omega_{\Dr}^{l,\sm}&\isom \dlim_n \Omega_{\LT}^{k,\sm}\ox_{\calO_{\LT}^{\sm}}\calO_{\LT}\ox_{\pi_n^{-1}\calO_{\calM_{\Dr,n}}}\pi_n^{-1}\Omega^l_{\calM_{\Dr,n}}\\
    &\isom \dlim_n \Omega_{\LT}^{k,\sm}\ox_{\calO_{\LT}^{\sm}}\calO_{\LT}\ox_{\pi_n^{-1}\calO_{\calM_{\Dr,n}}}\pi_n^{-1}(\pi_{\Dr,\GM,n}^{-1}\Omega^l_{\fl}\ox_{\pi_{\Dr,\GM,n}^{-1}\calO_{\fl}} \calO_{\calM_{\Dr,n}})\\
    &\isom \Omega_{\LT}^{k,\sm}\ox_{\calO_{\LT}^{\sm}}\calO_{\LT}\ox_{\pi^{-1}\calO_{\fl}}\pi^{-1}\Omega_{\fl}^{l}.
\end{align*}
where $\pi_n:\calM_{\Dr,\infty}\to \calM_{\Dr,n}$ is the projection. Similarly, as $\pi_{\LT,\GM,n}:\calM_{\LT,n}\to \check\fl$ is an \'etale morphism, and $\pi_{\LT,\GM,0}:\calM_{\LT,0}\to \check\fl$ has local sections, we see the natural map $\pi_{\LT,\GM,n}^*\Omega^1_{\check \fl}\to \Omega^1_{\calM_{\LT,n}}$ is an isomorphism. Then we deduce 
\begin{align*}
    \check{\pi}^{-1}\Omega_{\check\fl}^{k}\ox_{\check{\pi}^{-1}\calO_{\check\fl}}\calO_{\Dr}\ox_{\calO_{\Dr}^{\sm}}\Omega_{\Dr}^{l,\sm}\isom \Omega_{\LT}^{k,\sm}\ox_{\calO_{\LT}^{\sm}}\calO_{\Dr}\ox_{\calO_{\Dr}^{\sm}}\Omega_{\Dr}^{l,\sm}.
\end{align*}

\begin{theorem}\label{thm:ddbar}
Under the above identifications and $\calO_{\LT}\isom \calO_{\Dr}$, the differential operators $d$ and $\check{\bar d}$ are identified up to non-zero constants.
\begin{proof}
It suffices to handle the case $k=0$ and $l=0$ as the general case follows by taking wedge powers. By Corollary \ref{cor:OsmOfldense} and Proposition \ref{prop:dLT}, it suffices to check that $\check{\bar d}$ induces the usual derivation on $\calO_{\LT}^{\sm}$, and is trivial on $\pi^{-1}\calO_{\fl}$. By Proposition \ref{prop:dDrbar}, we know $\check{\bar d}$ induces the usual derivation on $\check\pi^{-1}\calO_{\check{\fl}}$, and $\check{\bar d}$ is $\calO_{\Dr}^{\sm}$-linear. As $\pi_{\LT,\GM,n}:\calM_{\LT,n}\to \check{\fl}$ and $\pi_{\Dr,\GM,n}:\calM_{\Dr,n}\to \fl$ are \'etale, we see $\check{\bar{d}}$ satisfies the desired property. Hence $d$ equals to $\check{\bar d}$ up to scalars.
\end{proof}
\end{theorem}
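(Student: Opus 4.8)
The plan is to reduce to the case $k=l=0$, where, under the identifications preceding the statement together with $\calO_{\LT}\isom\calO_{\Dr}$, both $d$ and $\check{\bar d}$ become operators $\calO_{\Dr}\to\calO_{\Dr}\ox_{\check\pi^{-1}\calO_{\check\fl}}\check\pi^{-1}\Omega^1_{\check\fl}$; the general $(k,l)$ then follow by taking $\wedge$-powers over $\calO_{\LT}^{\sm}$ and over $\check\pi^{-1}\calO_{\check\fl}$, exactly as in Corollary \ref{cor:dkLT}. Since the image of $\calO_{\LT}^{\sm}\ox_C\pi^{-1}\calO_{\fl}$ is dense in $\calO_{\LT}$ by Corollary \ref{cor:OsmOfldense}, Proposition \ref{prop:dLT} tells us that any operator out of $\calO_{\LT}$ which is $\pi^{-1}\calO_{\fl}$-linear and whose restriction to $\calO_{\LT}^{\sm}$ is a non-zero scalar multiple of the de Rham differential $\calO_{\LT}^{\sm}\to\Omega^{1,\sm}_{\LT}$ must equal that scalar multiple of $d$. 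So it suffices to establish these two properties for $\check{\bar d}$.

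The $\pi^{-1}\calO_{\fl}$-linearity is easy: by Scholze--Weinstein duality (Theorem \ref{thm:SWduality}), $\pi=\pi_{\LT,\HT}$ is identified with $\pi_{\Dr,\GM}$, so $\pi^{-1}\calO_{\fl}$ is identified with $\pi_{\Dr,\GM}^{-1}\calO_{\fl}$; as $\pi_{\Dr,\GM}$ factors through the projection $\calM_{\Dr,\infty}\to\calM_{\Dr,0}$ (with $\calM_{\Dr,0}$ a $\bbZ$-copy of $\Omega^d\subset\fl$), this subsheaf lies in $\calO_{\Dr}^{\sm}=\dlim_n\pi_n^{-1}\calO_{\calM_{\Dr,n}}$, and $\check{\bar d}$ is $\calO_{\Dr}^{\sm}$-linear by Proposition \ref{prop:dDrbar}.

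For the behaviour on $\calO_{\LT}^{\sm}$: Theorem \ref{thm:SWduality} also identifies $\pi_{\LT,\GM}$ with $\check\pi=\pi_{\Dr,\HT}$, and since each $\pi_{\LT,\GM,n}:\calM_{\LT,n}\to\check\fl$ is étale, $\pi_{\LT,\GM,n}^{*}\Omega^1_{\check\fl}\to\Omega^1_{\calM_{\LT,n}}$ is an isomorphism; taking $\dlim_n$ gives the identification $\Omega^{1,\sm}_{\LT}\isom\check\pi^{-1}\Omega^1_{\check\fl}\ox_{\check\pi^{-1}\calO_{\check\fl}}\calO_{\LT}^{\sm}$ used in the statement, under which $\check\pi^{-1}\calO_{\check\fl}\subset\calO_{\LT}^{\sm}$. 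By Proposition \ref{prop:dDrbar}, $\check{\bar d}$ restricts to a scalar multiple of the de Rham differential on $\check\pi^{-1}\calO_{\check\fl}$, and since the geometric Sen action of $\check\frg^0/\check\frp^0$ defining $\check{\bar d}$ is by derivations, $\check{\bar d}$ is a $C$-linear derivation on $\calO_{\Dr}$. Granting that $\check{\bar d}$ carries $\calO_{\LT}^{\sm}$ into $\Omega^{1,\sm}_{\LT}$, its restriction there is then a $C$-linear derivation $\calO_{\LT}^{\sm}\to\Omega^{1,\sm}_{\LT}$ agreeing up to a scalar with the de Rham differential on $\check\pi^{-1}\calO_{\check\fl}$; as $\calO_{\LT}^{\sm}=\dlim_n\pi_n^{-1}\calO_{\calM_{\LT,n}}$ is a filtered colimit of algebra sheaves étale over $\check\fl$ via the $\pi_{\LT,\GM,n}$, derivations extend uniquely along this (formally étale) extension, so the restriction is forced to be that scalar multiple of the usual de Rham differential. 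Combining with the previous paragraph and Proposition \ref{prop:dLT} yields $\check{\bar d}=c\,d$ for some $c\in C^{\times}$, and wedge powers finish the general case.

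The hard part will be the proviso just used: that $\check{\bar d}$ sends $\calO_{\LT}^{\sm}$ into the smooth part $\Omega^{1,\sm}_{\LT}$ rather than merely into $\calO_{\Dr}\ox_{\check\pi^{-1}\calO_{\check\fl}}\check\pi^{-1}\Omega^1_{\check\fl}$ --- equivalently, that the geometric Sen action of $\check\frg^0/\check\frp^0$ on $\calO_{\Dr}$ preserves the $G$-smooth subsheaf $\calO_{\LT}^{\sm}$ with image in $\Omega^{1,\sm}_{\LT}$. I would derive this from the functoriality of geometric Sen theory \cite{camargo_geometric_2022} applied to the étale morphism $\calM_{\LT,0}\to\check\fl$ (whose dual is the statement that $\calM_{\Dr,0}$ is a disjoint union of copies of $\Omega^d$): since $\calM_{\LT,\infty}$ is pro-étale over $\calM_{\LT,0}$, which is étale over $\check\fl$, the Sen-theoretic $\check\frg^0/\check\frp^0$-action relative to $\check\fl$ coincides with the relative de Rham differential of the étale tower $\{\calM_{\LT,n}\}_n$, which is $G$-smooth and valued in $\Omega^{1,\sm}_{\LT}$ by construction. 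Once this compatibility is in place, everything else above is formal.
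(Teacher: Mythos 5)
Your argument follows the same route as the paper: reduce to $k=l=0$ by wedge powers, show that $\check{\bar d}$ is $\pi^{-1}\calO_{\fl}$-linear because $\pi_{\Dr,\GM}$ factors through $\calM_{\Dr,0}$ (putting $\pi^{-1}\calO_{\fl}$ inside $\calO_{\Dr}^{\sm}$), show that $\check{\bar d}|_{\calO_{\LT}^{\sm}}$ is the de Rham differential by the \'etale uniqueness of derivations over $\check\fl$, and conclude by the uniqueness clause of Proposition~\ref{prop:dLT}. This is exactly the content the paper compresses into the sentence ``As $\pi_{\LT,\GM,n}$ and $\pi_{\Dr,\GM,n}$ are \'etale, we see $\check{\bar d}$ satisfies the desired property.''

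The one place you introduce unnecessary worry is the final paragraph. The ``hard part'' you flag --- that $\check{\bar d}$ carries $\calO_{\LT}^{\sm}$ into $\Omega^{1,\sm}_{\LT}$ --- is not a hypothesis you need to grant or establish by a separate Sen-functoriality argument; it is a consequence of the \'etale-uniqueness step you have already written down, provided you run that step in the ambient module rather than in $\Omega^{1,\sm}_{\LT}$. Concretely, $\check{\bar d}|_{\calO_{\LT}^{\sm}}$ is a $C$-linear derivation valued a priori in $M:=\calO_{\LT}\ox_{\calO_{\LT}^{\sm}}\Omega^{1,\sm}_{\LT}$ (it is the $\check{\bar\frn}^0$-action, which acts by derivations, and $\calO_{\LT}^{\sm}\subset\calO_{\LT}\isom\calO_{\Dr}$), and on $\check\pi^{-1}\calO_{\check\fl}$ it agrees with the usual de Rham differential composed with the inclusion $\Omega^{1,\sm}_{\LT}\hookrightarrow M$. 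Since each $\calO_{\calM_{\LT,n}}$ is \'etale over $\calO_{\check\fl}$, the restriction map $\Der_C(\calO_{\calM_{\LT,n}},M)\to\Der_C(\calO_{\check\fl},M)$ is injective; comparing $\check{\bar d}|_{\calO_{\calM_{\LT,n}}}$ with the ordinary de Rham differential $\calO_{\calM_{\LT,n}}\to\Omega^1_{\calM_{\LT,n}}\subset M$, which also extends the de Rham differential on $\calO_{\check\fl}$, forces them to coincide. Passing to the colimit over $n$, the statement that the image lands in $\Omega^{1,\sm}_{\LT}$ and equals $d$ there comes out for free. So the proposed extra appeal to functoriality of geometric Sen theory can simply be deleted.
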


\begin{corollary}\label{cor:dLTresolution}
For any $0\le k\le d$, there is an exact sequence 
\begin{align*}
0\to {\Omega_{\Dr}^{k,\sm}} \to  \calO_{\LT}\ox_{\calO_{\Dr}^{\sm}}\Omega_{\Dr}^{k,\sm} \to  \Omega_{\LT}^{1,\sm}\ox_{\calO_{\LT}^{\sm}}\calO_{\LT}\ox_{\calO_{\Dr}^{\sm}}\Omega_{\Dr}^{k,\sm} \to \cdots \to  \Omega_{\LT}^{d,\sm}\ox_{\calO_{\LT}^{\sm}}\calO_{\LT}\ox_{\calO_{\Dr}^{\sm}}\Omega_{\Dr}^{k,\sm}        \to 0                   .          
\end{align*}
induced by twists of $d$.
\begin{proof}
This follows from Theorem \ref{thm:ddbar} and Theorem \ref{thm:dbarDrresolution}.
\end{proof}
\end{corollary}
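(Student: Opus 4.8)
The plan is to obtain the exact sequence from Theorem~\ref{thm:dbarDrresolution} by tensoring that resolution with $\Omega_{\Dr}^{k,\sm}$ over $\calO_{\Dr}^{\sm}$ and then transporting the result through the identifications recorded just before Theorem~\ref{thm:ddbar}, together with the equivalence $\calO_{\LT}\isom\calO_{\Dr}$ of Theorem~\ref{thm:SWduality}.

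First I would start from the exact complex of Theorem~\ref{thm:dbarDrresolution},
\begin{align*}
0\to\calO_{\Dr}^{\sm}\to\calO_{\Dr}\to\calO_{\Dr}\ox_{\check\pi^{-1}\calO_{\check\fl}}\check\pi^{-1}\Omega_{\check\fl}^1\to\cdots\to\calO_{\Dr}\ox_{\check\pi^{-1}\calO_{\check\fl}}\check\pi^{-1}\Omega_{\check\fl}^d\to 0,
\end{align*}
whose differentials are the twists of $\check{\bar d}$. Each term is naturally an $\calO_{\Dr}^{\sm}$-module, and by Proposition~\ref{prop:dDrbar} each differential is $\calO_{\Dr}^{\sm}$-linear, so it makes sense to apply $-\ox_{\calO_{\Dr}^{\sm}}\Omega_{\Dr}^{k,\sm}$. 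Since $\Omega_{\Dr}^{k,\sm}=\wedge^k_{\calO_{\Dr}^{\sm}}\Omega_{\Dr}^{1,\sm}$ is locally free of finite rank over $\calO_{\Dr}^{\sm}$ — on a suitable refinement of $\ffrb_{\Dr}$ it is already free at a finite level, each $\calM_{\Dr,n}$ being smooth of dimension $d$ — this tensoring is locally a finite direct sum of copies of the original complex, hence preserves exactness. One thus gets an exact complex whose kernel term is $\calO_{\Dr}^{\sm}\ox_{\calO_{\Dr}^{\sm}}\Omega_{\Dr}^{k,\sm}=\Omega_{\Dr}^{k,\sm}$, whose $0$-th term is $\calO_{\Dr}\ox_{\calO_{\Dr}^{\sm}}\Omega_{\Dr}^{k,\sm}$, and whose $j$-th term (for $j\ge 1$) is $\calO_{\Dr}\ox_{\check\pi^{-1}\calO_{\check\fl}}\check\pi^{-1}\Omega_{\check\fl}^j\ox_{\calO_{\Dr}^{\sm}}\Omega_{\Dr}^{k,\sm}$, with differentials the corresponding twists of $\check{\bar d}$.

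Next I would rewrite every term via the isomorphisms established immediately before Theorem~\ref{thm:ddbar}, namely
\begin{align*}
\check\pi^{-1}\Omega_{\check\fl}^{j}\ox_{\check\pi^{-1}\calO_{\check\fl}}\calO_{\Dr}\ox_{\calO_{\Dr}^{\sm}}\Omega_{\Dr}^{k,\sm}\isom\Omega_{\LT}^{j,\sm}\ox_{\calO_{\LT}^{\sm}}\calO_{\Dr}\ox_{\calO_{\Dr}^{\sm}}\Omega_{\Dr}^{k,\sm},
\end{align*}
combined with $\calO_{\LT}\isom\calO_{\Dr}$, so that the $j$-th term becomes $\Omega_{\LT}^{j,\sm}\ox_{\calO_{\LT}^{\sm}}\calO_{\LT}\ox_{\calO_{\Dr}^{\sm}}\Omega_{\Dr}^{k,\sm}$ for $j\ge 1$, the $0$-th term becomes $\calO_{\LT}\ox_{\calO_{\Dr}^{\sm}}\Omega_{\Dr}^{k,\sm}$, and the kernel remains $\Omega_{\Dr}^{k,\sm}$; this is exactly the complex displayed in the statement. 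Finally, by Theorem~\ref{thm:ddbar}, under precisely these identifications the differentials $\check{\bar d}$ coincide with the twists of $d$ up to nonzero scalars, and rescaling a differential by a nonzero constant changes neither its kernel nor its image; hence the complex with differentials the twists of $d$ is exact as well, which is the claim.

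The step requiring the most care is the exactness of the twist by $\Omega_{\Dr}^{k,\sm}$: one must arrange that the basis of affinoid perfectoid opens on which the resolution of Theorem~\ref{thm:dbarDrresolution} was verified to be exact can be refined to one on which $\Omega_{\Dr}^{1,\sm}$ — and hence each $\Omega_{\Dr}^{k,\sm}$ — is free of finite rank over $\calO_{\Dr}^{\sm}$, so that $-\ox_{\calO_{\Dr}^{\sm}}\Omega_{\Dr}^{k,\sm}$ is locally a finite direct sum and commutes with taking cohomology. Once this is in place, the remainder is bookkeeping with the identifications preceding Theorem~\ref{thm:ddbar} and with $\calO_{\LT}\isom\calO_{\Dr}$.
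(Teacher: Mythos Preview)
Your proposal is correct and is precisely the argument the paper has in mind: the paper's one-line proof ``This follows from Theorem~\ref{thm:ddbar} and Theorem~\ref{thm:dbarDrresolution}'' unpacks exactly into tensoring the resolution of Theorem~\ref{thm:dbarDrresolution} by the locally free $\calO_{\Dr}^{\sm}$-module $\Omega_{\Dr}^{k,\sm}$ and then transporting via the identifications and Theorem~\ref{thm:ddbar}. Your explicit justification of why the twist preserves exactness (local freeness of $\Omega_{\Dr}^{k,\sm}$) fills in the only point the paper leaves implicit.
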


Hence we arrived at a diagram 
$$
\begin{small}
\begin{tikzcd}
 \calO_{\LT} \arrow[r] \arrow[d]       & {\calO_{\LT}\ox_{\calO_{\LT}^{\sm}}\Omega_{\LT}^{1,\sm}} \arrow[r] \arrow[d] & \cdots \arrow[r] & {\calO_{\LT}\ox_{\calO_{\LT}^{\sm}}\Omega_{\LT}^{d,\sm}} \arrow[d] \\
 \calO_{\LT}\ox_{\calO_{\Dr}^{\sm}}\Omega_{\Dr}^{1,\sm} \arrow[d] \arrow[r]                 & \Omega_{\LT}^{1,\sm}\ox_{\calO_{\LT}^{\sm}}\calO_{\LT}\ox_{\calO_{\Dr}^{\sm}}\Omega_{\Dr}^{1,\sm} \arrow[r] \arrow[d]                                                        & \cdots \arrow[r] & \Omega_{\LT}^{d,\sm}\ox_{\calO_{\LT}^{\sm}}\calO_{\LT}\ox_{\calO_{\Dr}^{\sm}}\Omega_{\Dr}^{1,\sm} \arrow[d]                                                        \\
 \vdots \arrow[d]                      & \vdots \arrow[d]                                                             &                  & \vdots \arrow[d]                                                   \\
 \calO_{\LT}\ox_{\calO_{\Dr}^{\sm}}\Omega_{\Dr}^{d,\sm} \arrow[r]                           & \Omega_{\LT}^{1,\sm}\ox_{\calO_{\LT}^{\sm}}\calO_{\LT}\ox_{\calO_{\Dr}^{\sm}}\Omega_{\Dr}^{d,\sm} \arrow[r]                                                                  & \cdots \arrow[r] & \Omega_{\LT}^{d,\sm}\ox_{\calO_{\LT}^{\sm}}\calO_{\LT}\ox_{\calO_{\Dr}^{\sm}}\Omega_{\Dr}^{d,\sm}                                                                 
\end{tikzcd}
\end{small}
$$
with horizontal rows are twists of $d$, and verticle rows are twists of $\bar d$. By modifying the sign of the differential operators, we can make it a double complex, and we denote it by $M^{\bullet,\bullet}$. Let $\DR_{\LT}$ be the de Rham complex $[\calO_{\LT}^{\sm}\to \Omega_{\LT}^{1,\sm}\to\cdots\to \Omega_{\LT}^{d,\sm}]$ and let $\DR_{\Dr}$ be the de Rham complex $[\calO_{\Dr}^{\sm}\to \Omega_{\Dr}^{1,\sm}\to\cdots\to \Omega_{\Dr}^{d,\sm}]$. It turns out that $M^{\bullet,\bullet}$ interpolates $\DR_{\LT}$ and $\DR_{\Dr}$.
\begin{theorem}
There is a quasi-isomorphism 
\begin{align*}
    \DR_{\LT}\aisom \Tot(M^{\bullet,\bullet})\ov{\sim}\ot\DR_{\Dr}
\end{align*}
of complexes of $G\times\check{G}$-equivariant sheaves on $\calM_{\LT,\infty}\isom\calM_{\Dr,\infty}$. 
\begin{proof}
By Corollary \ref{cor:dLTresolution} and Theorem \ref{thm:dbarLTresolution}, we see the horizontal rows of $M^{\bullet,\bullet}$ form a resolution of $\DR_{\Dr}$, and the vertical rows of $M^{\bullet,\bullet}$ form a resolution of $\DR_{\LT}$. Hence we deduce the desired isomorphisms by \cite[\href{https://stacks.math.columbia.edu/tag/0133}{Tag 0133}]{stacks-project}.
\end{proof}
\end{theorem}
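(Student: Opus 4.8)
The plan is to exhibit $\Tot(M^{\bullet,\bullet})$ as a common resolution of the two de Rham complexes, and then to conclude with the standard comparison lemma for bounded double complexes having exact rows and exact columns \cite[Tag 0133]{stacks-project}. Recall that $M^{p,q}=\Omega_{\LT}^{p,\sm}\ox_{\calO_{\LT}^{\sm}}\calO_{\LT}\ox_{\calO_{\Dr}^{\sm}}\Omega_{\Dr}^{q,\sm}$ for $0\le p,q\le d$, with horizontal differential the twist of $d$ (Corollary \ref{cor:dkLT}) and vertical differential the sign-adjusted twist of $\bar d$ (Proposition \ref{prop:dbarLT}); these commute by construction, so $M^{\bullet,\bullet}$ is a genuine double complex supported on the square $[0,d]^2$, and every entry and every arrow is $G\times\check G$-equivariant since the isomorphism $\calO_{\calM_{\LT,\infty}}\isom\calO_{\calM_{\Dr,\infty}}$ of Theorem \ref{thm:SWduality}, the sheaves $\Omega_{\LT}^{\bullet,\sm}$, $\Omega_{\Dr}^{\bullet,\sm}$, and the operators $d$, $\bar d$ all are.

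First I would augment $M^{\bullet,\bullet}$ in both directions. On the left I adjoin the column $M^{-1,q}:=\Omega_{\Dr}^{q,\sm}$, with structure maps $\Omega_{\Dr}^{q,\sm}\inj M^{0,q}$ induced by $\calO_{\Dr}^{\sm}\subset\calO_{\Dr}=\calO_{\LT}$ (Theorem \ref{thm:isom}); Corollary \ref{cor:dLTresolution} says precisely that the resulting augmented rows $0\to\Omega_{\Dr}^{q,\sm}\to M^{0,q}\to\cdots\to M^{d,q}\to 0$ are exact. On top I adjoin the row $M^{p,-1}:=\Omega_{\LT}^{p,\sm}$, with structure maps $\Omega_{\LT}^{p,\sm}\inj M^{p,0}$ the evident inclusions; tensoring the exact sequence of Theorem \ref{thm:dbarLTresolution} over $\calO_{\LT}^{\sm}$ with the locally free module $\Omega_{\LT}^{p,\sm}$ (valid since $\bar d$ is $\calO_{\LT}^{\sm}$-linear) shows the augmented columns $0\to\Omega_{\LT}^{p,\sm}\to M^{p,0}\to\cdots\to M^{p,d}\to 0$ are exact. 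Next I would check that the differential inherited by the top edge $\{\Omega_{\LT}^{p,\sm}\}_p$ is the usual de Rham differential, which is immediate from Corollary \ref{cor:dkLT}(i), so that this edge is $\DR_{\LT}$; and that the differential inherited by the left edge $\{\Omega_{\Dr}^{q,\sm}\}_q$ is, up to a non-zero constant, the Drinfeld de Rham differential, so that this edge is $\DR_{\Dr}$. Granting these, \cite[Tag 0133]{stacks-project} produces explicit quasi-isomorphisms of complexes of sheaves $\DR_{\LT}\to\Tot(M^{\bullet,\bullet})$ and $\DR_{\Dr}\to\Tot(M^{\bullet,\bullet})$; these are $G\times\check G$-equivariant because all the augmentation maps and differentials in sight are, and this is the asserted zig-zag $\DR_{\LT}\aisom\Tot(M^{\bullet,\bullet})\ov{\sim}\ot\DR_{\Dr}$.

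I expect the only genuinely delicate point to be the identification of the differential inherited by the left edge $\{\Omega_{\Dr}^{q,\sm}\}_q$ with the Drinfeld de Rham differential $\check d$; everything else is formal bookkeeping with the double complex. Concretely, one must see that $\bar d$ carries the subsheaf of $\check G$-smooth Drinfeld forms, sitting inside $\calO_{\LT}\ox_{\pi^{-1}\calO_{\fl}}\pi^{-1}\Omega_{\fl}^{\bullet}$ via $\calO_{\Dr}^{\sm}\subset\calO_{\LT}$, into itself and there agrees with $\check d$ up to a non-zero constant. This is exactly the counterpart of Theorem \ref{thm:ddbar} obtained by exchanging the roles of $\calM_{\LT,\infty}$ and $\calM_{\Dr,\infty}$, and is proved in the same way: $\bar d$ is dual to the action of $\bar\frn^0$, hence is the derivation along the fibres of $\pi=\pi_{\LT,\HT}$; since $\pi$ is identified with $\pi_{\Dr,\GM}$ under the Scholze--Weinstein isomorphism and $\pi_{\Dr,\GM,n}\colon\calM_{\Dr,n}\to\fl$ is \'etale, this derivation acts on $\calO_{\calM_{\Dr,n}}$ as the finite-level de Rham differential $\check d$, and because the $G$- and $\check G$-actions commute it preserves $\check G$-smoothness. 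Once this is recorded, the remaining verifications --- that the augmentations are maps of double complexes, and that the total complex is finite so \cite[Tag 0133]{stacks-project} applies with no convergence issue --- are routine.
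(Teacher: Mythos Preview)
Your proposal is correct and follows exactly the paper's approach: exactness of the augmented columns via Theorem \ref{thm:dbarLTresolution}, exactness of the augmented rows via Corollary \ref{cor:dLTresolution}, and then \cite[Tag 0133]{stacks-project}. You are more explicit than the paper about verifying that the inherited differential on the left edge $\{\Omega_{\Dr}^{q,\sm}\}_q$ is the Drinfeld de Rham differential $\check d$; the paper tacitly absorbs this into the phrase ``the horizontal rows form a resolution of $\DR_{\Dr}$'', but as you observe it is the symmetric counterpart of Theorem \ref{thm:ddbar} (i.e.\ $\bar d=\check d$ up to a non-zero constant on $\calO_{\Dr}^{\sm}$, coming from the \'etaleness of $\pi_{\Dr,\GM,n}$) and is proved in the same way.
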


\begin{proof}[Proof of \ref{thm:main}]
We follow the argument in \cite[Theorem 5.2.2]{dospinescu2024jacquetlanglandsfunctorpadiclocally} on handling compactly supported cohomology of these de Rham complexes. Let $\calM^B$ be the Berkovich space associated to $\calM:=\calM_{\LT,\infty}\isom\calM_{\Dr,\infty}$, and let $\DR_{\LT}^B$, $\DR_{\Dr}^B$ be the de Rham complexes on $\calM^B$ such that if $f:\calM\to \calM^B$ is the map from adic space to the associated Berkovich space \cite[Definition 13.7]{ScholzeDiamond} then $f^*\DR_{\LT}^B\isom \DR_{\LT}$ and $f^*\DR_{\Dr}^B\isom \DR_{\Dr}$. By \cite[Proposition 13.13]{ScholzeDiamond}, we know the isomorphism $\DR_{\LT}\isom \DR_{\Dr}$ induces an isomorphism $\DR_{\LT}^B\isom \DR_{\Dr}^B$. Now we use the six functor formalism on condensed anima developed in \cite{heyer20246functorformalismssmoothrepresentations}. In the following, we will let $f^{-1}$ denote the pullback functors in this formalism. Let $*$ denote the one point space, and we claim the natural map $\calM^B\to *$ is $\bbQ_p$-fine in the sense of \cite[Definition 3.5.17]{heyer20246functorformalismssmoothrepresentations}.Indeed, let $\calM_{\LT,n}^B$ be the Berkovich space associated to $\calM_{\LT,n}$, then $\calM_{\LT,n}^B$ is a locally finite dimensional Hausdorff space as it is the Berkovich space associated to a rigid analytic variety, and \cite[Theorem 4.8.9 (i)]{heyer20246functorformalismssmoothrepresentations} implies $\calM_{\LT,n}^B\to *$ is $\bbQ_p$-fine. We claim that $\calM_{\LT,\infty}^B\to \calM_{\LT,n}^B$ is a $G_n$-torsor. Indeed, we have $\calM_{\LT,\infty}^B/G_n=(\calM_{\LT,\infty}/G_n)^B=\calM_{\LT,n}^B$. As $G_n$ acts freely on $\calM_{\LT,\infty}^B$, we deduce $\calM_{\LT,\infty}^B\times_{\calM_{\LT,n}^B}\calM_{\LT,\infty}^B\isom \calM_{\LT,\infty}^B\times G_n$. Now since $\calM_{\LT,\infty}^B\to \calM_{\LT,n}^B$ is represented in profinite sets, we see $\calM_{\LT,\infty}^B\to \calM_{\LT,n}^B$ is also $\bbQ_p$-fine by \cite[3.4.11(ii)]{heyer20246functorformalismssmoothrepresentations}. Therefore, there is a well-defined lower shriek functor $f_!$ associated to the natural map $f:\calM^B\to *$ and 
\begin{align*}
    f_!\DR_{\LT}^B\isom f_!\DR_{\Dr}^B.
\end{align*}
Now we identify both sides with the desired de Rham cohomology groups. By definition, $\calO_{\calM_{\LT,\infty}^B}^{\sm}\isom \dlim_n\pi_n^{-1}\calO_{\calM_{\LT,n}^B}$ with $\pi_n:\calM_{\LT,\infty}^B\to \calM_{\LT,n}^B$ being the projection map. Similarly, there is a de Rham complex $\DR_{\LT,n}^B$ associated to $\calM_{\LT,n}^B$ and $\DR_{\LT}^B=\dlim_n\pi_n^{-1}\DR_{\LT,n}^B$. We have a cartesian diagram 
$$
\begin{tikzcd}
{\calM^B_{\LT,\infty}} \arrow[r, "\pi_n"] \arrow[d, "f"'] & {\calM_{\LT,n}^B} \arrow[d, "f_n"] \\
* \arrow[r, "q_n"]                                     & */G_n                             
\end{tikzcd}$$
where $f_n$ is induced by the $G_n$-torsor $\calM_{\LT,\infty}^B$ over $\calM_{\LT,n}^B$, with $G_n=\ker(\bbG(\calO_L)\to \bbG(\calO_L/\varpi^n\calO_L))$ and $q_n:*\to */G_n$ is $\bbQ_p$-proper by \cite[Proposition 5.3.2]{heyer20246functorformalismssmoothrepresentations}. By the prim base change \cite[Lemma 4.5.13]{heyer20246functorformalismssmoothrepresentations} we see $f_!\pi_n^{-1}\DR_{\LT,n}^B\isom q_n^{-1}f_{n,!}\DR_{\LT,n}^B$, and
\begin{align*}
    f_!\DR_{\LT}^B\isom \dlim_n f_!\pi_n^{-1}\DR_{\LT,n}^B\isom \dlim_n q_n^{-1}f_{n,!}\DR_{\LT,n}^B.
\end{align*}
Hence it remains to identify the right hand side with the compactly supported de Rham cohomology groups. For $m\ge n$ integers, consider the following commutative diagram
$$
\begin{tikzcd}
{\calM_{\LT,\infty}^B} \arrow[r, "\pi_m"] \arrow[d, "f"'] & {\calM_{\LT,m}^B} \arrow[r, "{\pi_{n,m}}"] \arrow[d, "f_{m}"] & {\calM_{\LT,n}^B} \arrow[d, "f_n"] \\
* \arrow[r, "q_m"] \arrow[rrd, "i"']                    & */G_{m} \arrow[r, "{q_{n,m}}"] \arrow[rd, "i_m"]            & */G_n \arrow[d, "i_n"]           \\
                                                        &                                                             & *                               
\end{tikzcd}
$$
with each square being cartesian. Then 
\begin{align*}
    q_n^{-1}f_{n,!}\DR_{\LT,n}^B\isom i_{n,!}q_{n,!}q_n^{-1}f_{n,!}\DR_{\LT,n}^B&\ov{(*)}{=}\dlim_{m\ge n}i_{n,!}q_{n,m,!}q_{n,m}^{-1}f_{n,!}\DR_{\LT,n}^B\\
    &\ov{(**)}{=}\dlim_{m\ge n}i_{n,!}q_{n,m,!}f_{m,!}\pi_{n,m}^{-1}\DR_{\LT,n}^B\\
    &=\dlim_{m\ge n} i_{m,!}f_{m,!}\pi_{n,m}^{-1}\DR_{\LT,n}^B
\end{align*}
where $(*)$ follows from \cite[(5.3.8)]{heyer20246functorformalismssmoothrepresentations}, and $(**)$ follows from the prim base change theorem. Therefore,
\begin{align*}
    \dlim_n q_n^{-1}f_{n,!}\DR_{\LT,n}^B&=\dlim_n\dlim_{m\ge n}i_{m,!}f_{m,!}\pi_{n,m}^{-1}\DR_{\LT,n}^B\\
    &=\dlim_{n\le m}\dlim_m i_{m,!}f_{m,!}\pi_{n,m}^{-1}\DR_{\LT,n}^B\\
    &=\dlim_m i_{m,!}f_{m,!}\DR_{\LT,m}^B.
\end{align*}
Since $i_{m,!}f_{m,!}\DR_{\LT,m}^B$ computes the compactly supported de Rham cohomology of $\calM_{\LT,m}^B$, and we know the cohomology of rigid analytic varieties is the same as the associated Berkovich spaces by \cite[1.6]{Berkovich}. From this we deduce
\begin{align*}
    f_!\DR_{\LT}^B\isom \dlim_n \Gamma_{\dR,c}(\calM_{\LT,n}).
\end{align*}
Applying the similar argument to the Drinfeld side, we see
\begin{align*}
    f_!\DR_{\Dr}^B\isom \dlim_n \Gamma_{\dR,c}(\calM_{\Dr,n}).
\end{align*}
Consequently, 
\begin{align*}
    \dlim_n \Gamma_{\dR,c}(\calM_{\LT,n})\ov{G\times \check G}{\isom}\dlim_n \Gamma_{\dR,c}(\calM_{\Dr,n})
\end{align*}
where the $G\times \check{G}$-equivariancy is clear from the construction.
\end{proof}

\bibliographystyle{amsalpha}

\bibliography{DrLT.bib}
\end{document}